 \newtheorem{theorem}{Theorem}[section]
\newtheorem{proposition}{Proposition}[section]
\newtheorem{lemma}{Lemma}[section]
\def\eps{\varepsilon}
\begin{document}
\title{Liouville-type theorems for polyharmonic H\'enon-Lane-Emden system} 

\author{Quoc Hung PHAN} 

\address{{Institute of Research and Development,
Duy Tan University, Da Nang, Vietnam}}

\email{phanquochung@dtu.edu.vn}

\keywords{H\'enon-Lane-Emden system; Nonexistence; Liouville-type theorem; Polyharmonic elliptic system}
\subjclass{primary 35B53, 35J61; secondary 35B08, 35A01}
\begin{abstract}
We study Liouville-type theorem for  polyharmonic H\'enon-Lane-Emden system $(-\Delta)^mu=|x|^av^p,\; (-\Delta)^mv=|x|^bu^q$  when $m,p,q\geq 1, pq\ne 1$,  and $a,b\geq 0$. It is a natural conjecture that the nonexistence of positive solutions should be true if and only if $(N+a)/(p+1)+$ $(N+b)/(q+1)>N-2m$.  It is shown by Fazly\cite{Faz14} that the conjecture holds for radial solutions in all dimensions and for classical solutions in dimension $N\leq 2m+1$. We here give some partial results in dimension $N\geq 2m+2$.
\end{abstract}
\maketitle

\section{Introduction}
In this paper, we study the following polyharmonic H\'enon-Lane-Emden  system 
\begin{align}\label{1}
\begin{cases}(-\Delta)^mu=|x|^av^p,\quad x\in\Omega,\\
(-\Delta)^mv=|x|^bu^q,\quad x\in\Omega,
\end{cases}
\end{align}
where $a,b\geq 0$, $m,p,q\geq 1$, $pq\ne 1$  and $\Omega$ is a domain of ${\mathbb R}^N$. 
Our primary interest is the Liouville property -- i.e. the nonexistence of solution 
in the entire space $\Omega={\mathbb R}^N$. Throughout of this paper, we restrict ourselves to the case of nonnegative classical solutions.

We first recall the counterpart (\ref{1}) when $m=1$, the so-called H\'enon-Lane-Emden system 
\begin{align}\label{1a}
\begin{cases}
-\Delta u=|x|^av^p,\quad x\in \Omega,\\
-\Delta v=|x|^bu^q,\quad x\in \Omega,
\end{cases}
\end{align}
which has been extensively studied by many authors. The H\'enon-Lane-Emden conjecture states that there is no positive classical solution to (\ref{1a})  in $\Omega={\mathbb R}^N$ if and only if 
\begin{align}\label{hyper}
\frac{N+a}{p+1}+\frac{N+b}{q+1}>N-2.
\end{align}
For the case $a=b=0$, this conjecture  is known to be true for radial solutions in all dimensions \cite{Mit96, SZ98}. For non-radial solutions, in dimension $N\leq 2$, the conjecture is a consequence of a result of Mitidieri and Pohozaev \cite{MP01}. In dimension $N=3$, it was proved by Serrin and Zou \cite{SZ96} under the additional assumption that $(u,v)$ has at most polynomial growth at $\infty$. This assumption was then removed by Polacik, Quittner and Souplet \cite{PQS07} and hence the conjecture is true for  $N=3$. Recently, the conjecture was proved in dimension $N=4$ by Souplet \cite{Sou09}. Some  partial results were also established for $N\geq 5$,  see \cite{Sou09, BM02, Lin98, CL09}.
For the case $a\ne 0$ or $b\ne 0$, the conjecture was proved for the case of radial solutions in all dimensions (see \cite{BVG10}). More recently, it was proved  in dimension $N\leq 3$ for the class of bounded solutions (see \cite{FG, Pha12}). In higher dimensions, the conjecture is still open, only some partial results were obtained (see e.g.\cite{Pha12} and references therein). 

Concerning the scalar counterpart of (\ref{1}), $(-\Delta)^m u=|x|^au^p$, the Liouville type result is  completely solved when $a=0$ (see \cite{Lin98,WX99}). The critical exponent in this case is $p_S(m)=\frac{N+2m}{N-2m}$. When $a\ne 0$, the Liouville type result was proved in dimensions $N\leq 2m+1$ -- see \cite{PhS} for $m=1$, Cowan \cite{Cow14} for $m=2$, and Fazly \cite{Faz14} for any $m\geq 1$.

We now return to general system (\ref{1}), it has been conjectured that the critical hyperbola is the following
\begin{align}\label{hyperbola1}
\frac{N+a}{p+1}+\frac{N+b}{q+1}=N-2m.
\end{align}

\noindent{\bf Conjecture A.} {\it Suppose that $(p, q)$ is below the critical hyperbola (\ref{hyperbola1}), i.e.
\begin{align}\label{hyperbola}
\frac{N+a}{p+1}+\frac{N+b}{q+1}>N-2m,
\end{align}
 then there is no positive solution in ${\mathbb R}^N$ of system (\ref{1a}). }

So far, this conjecture was proved for the class of radial solutions in any dimension (see \cite{Faz14, LGZ06}). For nonradial solutions, this conjecture is true for dimension $N\leq 2m$. In fact, the Liouville-type theorem for super-solutions was completely proved under a stronger  assumption (e.g \cite{MP01}), namely
$$\max\Big\{\frac{2m(p+1)}{pq-1}+\frac{a+bp}{pq-1}, \frac{2m(q+1)}{pq-1}+\frac{b+aq}{pq-1}\Big\}\geq N-2m.$$
More recently,  Fazly in \cite{Faz14}  has proved this conjecture for general case in dimension $N=2m+1$ in the class of bounded solutions. And independently, for the special case $a=b=0$, the authors in \cite{AYZ14} have proved this conjecture in dimension $N=2m+1$ and $N=2m+2$. The proofs in \cite{Faz14, AYZ14} are essential refinements of those of Souplet \cite{Sou09}. 

In this paper, we establish the Liouville-type theorems for system (\ref{1a}) in higher dimensions. Our results are the following 

\begin{theorem}\label{th1}
Let $(u,v)$ be nonnegative bounded solution of (\ref{1}) in $\mathbb R^N$. Assume (\ref{hyperbola}) and
\begin{align}\label{condi1}\max\Big\{\frac{2m(p+1)}{pq-1}, \frac{2m(q+1)}{pq-1}\Big\}\geq N-2m-1.
\end{align}
Then $u\equiv v\equiv 0$.
\end{theorem}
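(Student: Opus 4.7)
The plan is to extend Souplet's scheme \cite{Sou09}, as adapted to the polyharmonic setting in \cite{Faz14, AYZ14}, from the ranges $N\leq 2m+1$ to the larger range permitted by \eqref{condi1}. Three ingredients will be combined: a reduction of \eqref{1} to a cooperative second-order cascade, the passage to spherical averages, and a Souplet-type feedback inequality; the subcritical condition \eqref{hyperbola} is then used to close the argument.

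First, set $u_0=u$, $v_0=v$ and $u_k=(-\Delta)u_{k-1}$, $v_k=(-\Delta)v_{k-1}$ for $1\leq k\leq m-1$. Since $(u,v)$ is bounded and $(-\Delta)^m u,(-\Delta)^m v\geq 0$, a standard iterated maximum-principle argument for bounded functions shows that $u_k,v_k\geq 0$ throughout $\mathbb R^N$. This reduces \eqref{1} to a $2m$-component cooperative system of second-order inequalities. Passing to spherical averages $\bar w(r)=|\partial B_r|^{-1}\int_{\partial B_r}w\,d\sigma$ for $w\in\{u,v,u_k,v_k\}$ and applying Jensen's inequality to $s\mapsto s^p,s^q$ produces the radial system
\begin{align*}
-\bar u_{k-1}''-\tfrac{N-1}{r}\bar u_{k-1}'&=\bar u_k,\qquad 1\leq k\leq m-1,\\
-\bar u_{m-1}''-\tfrac{N-1}{r}\bar u_{m-1}'&\geq r^a\bar v^{\,p},
\end{align*}
with symmetric inequalities for the $\bar v_k$'s. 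Iterated radial integration, combined with cut-off testing on dyadic annuli to secure an initial $L^1$-decay (in the spirit of \cite{MP01}), yields baseline pointwise decay estimates for $\bar u,\bar v$.

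The crux --- and the main obstacle --- is the Souplet-type feedback inequality: I expect to establish, on dyadic annuli $R\leq |x|\leq 2R$, an estimate of the form
\[ \bar v(R)^{p+1}\leq C\,\bar u(R)^{q+1}+C(1+R)^{-\gamma} \]
for some $\gamma>0$. For $m=1$ this is obtained by cross-multiplying the two equations by $u$ and $v$, integrating against a suitable cut-off, and applying Green's identity. For $m\geq 2$ the identity has to be iterated through the cascade $u_0,\dots,u_{m-1}$ and $v_0,\dots,v_{m-1}$, which is legitimate precisely because of the sign information secured above. The weights $|x|^a,|x|^b$ only enhance the decay and are harmless here.

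Finally, insert the feedback inequality into the radial ODE system and combine with \eqref{hyperbola}. Comparison with the self-similar power profiles forces $\bar u,\bar v$ to decay strictly faster than the scale-invariant rates $r^{-\alpha}$ and $r^{-\beta}$ with $\alpha=(2m(p+1)+a+bp)/(pq-1)$ and $\beta=(2m(q+1)+b+aq)/(pq-1)$; performing one additional integration --- the step that produces the extra unit in \eqref{condi1} relative to the classical Mitidieri-Pohozaev range --- drives $\bar u,\bar v\to 0$ along a sequence of radii going to infinity. The positivity propagation from the first step then forces $u\equiv v\equiv 0$.
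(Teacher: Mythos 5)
Your proposal describes a route that is genuinely different from the paper's proof, and it contains gaps at exactly the points the paper flags as the central difficulties.

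First, the proof in the paper is \emph{not} the radial-average/Jensen/ODE cascade you describe. Souplet's scheme \cite{Sou09}, as implemented here, works with the Rellich--Pohozaev identity (Lemma~\ref{pohozaev}), Sobolev and interpolation inequalities on $S^{N-1}$ (Lemma~\ref{1b}), elliptic $W^{2m,k}$ estimates on annuli (Lemma~\ref{lem2a}), the integral bounds of Lemma~\ref{lem4a}--\ref{lem}, and a measure argument that produces a good radius $\tilde R$ and the functional feedback $F(R)\le C R^{-\tilde a}F^{\tilde b}(4R)$ with $\tilde a>0$, $\tilde b<1$. No spherical averaging, no Jensen step, and no pointwise comparison between $u$ and $v$ are used; the paper works throughout with the quantities $\|u_i\|_{L^k(S^{N-1})}$, $\|D_x^{2m}u\|_{L^k(S^{N-1})}$, etc., and never needs a sign for the intermediate Laplacians.

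Second, the two steps you rely on are precisely the ones the author identifies as obstructions. (i)~Your assertion that $u_k=(-\Delta)^k u\geq 0$ follows from ``a standard iterated maximum-principle argument'' is too glib: $(-\Delta)^{m-1}u$ is superharmonic, but for $k<m-1$ the sign does not come from a maximum principle; it requires an ODE argument on spherical averages that exploits both $u\geq 0$ and boundedness of $u$, and must be iterated carefully up the cascade (the superharmonicity of the intermediate quantities is not a priori known). This is provable, but it is a lemma that needs its own proof, not a passing remark. (ii)~More seriously, your ``Souplet-type feedback inequality'' $\bar v(R)^{p+1}\le C\bar u(R)^{q+1}+C(1+R)^{-\gamma}$ is a comparison estimate between the two components, obtained for $m=1$ by a maximum-principle cross-multiplication. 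The paper explicitly states: ``we can not derive the comparison between components $u$ and $v$ due to the lack of maximum principle when dealing with high order operator, we follow the idea in \cite{AYZ14} to overcome this difficulty.'' Your claim that ``the identity has to be iterated through the cascade \dots which is legitimate precisely because of the sign information secured above'' is exactly the step whose failure motivates the paper's use of the Rellich--Pohozaev/interpolation route instead. You would need to actually exhibit the iterated Green's identity and show it closes; as written, the crux of your argument is asserted, not proved.

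Finally, the role of the hypothesis~\eqref{condi1} is never located in your sketch. In the paper it enters very concretely: in Step~5, when $1/h\ge 2m/(N-1)$ one computes $1-b_{1i}=\frac{pq-1}{(q+1)(N-1)}(\alpha-N+2m+1)$, and this quantity is positive exactly when $\alpha\ge N-2m-1$ (with the symmetric case for $\beta$ after swapping $p$ and $q$). Your phrase ``performing one additional integration --- the step that produces the extra unit in \eqref{condi1}'' is not a verifiable argument, and indeed it mislocates the mechanism: the ``$+1$'' comes from the interpolation exponents on $S^{N-1}$, not from a radial integration. For these reasons the proposal cannot be regarded as a proof of Theorem~\ref{th1} as stated, and in particular it does not reproduce the paper's argument.
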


\begin{theorem}\label{th2}
Let $(u,v)$ be nonnegative solution of (\ref{1}) in $\mathbb R^N$.  Assume (\ref{condi1}) and 
\begin{align}\label{condi2}
\frac{N}{p+1}+\frac{N}{q+1}>N-2m.
\end{align}
Then $u\equiv v\equiv 0$.
\end{theorem}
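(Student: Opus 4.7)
The plan is to reduce Theorem \ref{th2} to Theorem \ref{th1} via a Polacik-Quittner-Souplet doubling-rescaling argument. Since $a, b \ge 0$, hypothesis (\ref{condi2}) is strictly stronger than (\ref{hyperbola}), so if $(u,v)$ happens to be bounded, Theorem \ref{th1} applies directly and forces $u\equiv v\equiv 0$. It therefore suffices to rule out unbounded nonnegative solutions.

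Suppose for contradiction that a nontrivial, unbounded solution $(u,v)$ exists. I would introduce the scaling-invariant function
\[
M(x) := u(x)^{1/\alpha_0} + v(x)^{1/\beta_0}, \qquad \alpha_0 := \frac{2m(p+1)}{pq-1},\ \beta_0 := \frac{2m(q+1)}{pq-1},
\]
tailored to the unweighted Lane-Emden scaling, and apply the doubling lemma to obtain $y_k \in \mathbb R^N$ with $M(y_k) \to \infty$, $r_k := 1/M(y_k) \to 0$, and $M \le 2 M(y_k)$ on $B_{k r_k}(y_k)$. Local boundedness of classical solutions rules out $y_k \to 0$. The rescaled pair $\tilde u_k(z) := r_k^{\alpha_0} u(y_k + r_k z)$, $\tilde v_k(z) := r_k^{\beta_0} v(y_k + r_k z)$ satisfies, thanks to $\alpha_0 + 2m = \beta_0 p$ and $\beta_0 + 2m = \alpha_0 q$,
\[
(-\Delta)^m \tilde u_k = |y_k + r_k z|^a \tilde v_k^p, \qquad (-\Delta)^m \tilde v_k = |y_k + r_k z|^b \tilde u_k^q,
\]
on $B_k(0)$, is uniformly bounded on compacts, and obeys the normalisation $\tilde u_k(0)^{1/\alpha_0} + \tilde v_k(0)^{1/\beta_0} = 1$.

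If $(y_k)$ admits a bounded subsequence, after relabelling $y_k \to y^* \ne 0$ and the coefficients converge locally uniformly to the positive constants $|y^*|^a, |y^*|^b$. Elliptic regularity yields a bounded nontrivial nonnegative $C^{2m-1}_{\mathrm{loc}}$-limit $(\tilde u, \tilde v)$ solving a constant-coefficient polyharmonic Lane-Emden system on $\mathbb R^N$; a final multiplicative rescaling absorbs the constants and produces a bounded nontrivial solution of $(-\Delta)^m U = V^p$, $(-\Delta)^m V = U^q$ on $\mathbb R^N$. Since (\ref{condi1}) and (\ref{condi2}) are exactly the hypotheses of Theorem \ref{th1} with $a = b = 0$, this limit is excluded, contradicting the setup.

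The remaining and hardest scenario is $|y_k| \to \infty$: then $|y_k + r_k z|^a \sim |y_k|^a$ blows up uniformly on every compact, and the unweighted doubling does not produce a useful limit in the naive way. My strategy is to replace $M$ by a weighted doubling function of the form
\[
N(x) := \bigl(|x|^{(a+bp)/(pq-1)} u(x)\bigr)^{1/\alpha} + \bigl(|x|^{(b+aq)/(pq-1)} v(x)\bigr)^{1/\beta},
\]
with weighted Fowler exponents $\alpha, \beta$, reprove the doubling inequality for $N$, and rescale using $s_k := 1/N(y_k)$ together with the renormalisation $\hat u_k := |y_k|^{(a+bp)/(pq-1)} \tilde u_k$, $\hat v_k := |y_k|^{(b+aq)/(pq-1)} \tilde v_k$; the new pair satisfies
\[
(-\Delta)^m \hat u_k = \bigl(|y_k+r_kz|/|y_k|\bigr)^a \hat v_k^p,\qquad (-\Delta)^m \hat v_k = \bigl(|y_k+r_kz|/|y_k|\bigr)^b \hat u_k^q,
\]
whose coefficients tend to $1$ since $r_k/|y_k|\to 0$; the blow-up then converges to a bounded nontrivial solution of the unweighted polyharmonic Lane-Emden system, again ruled out by Theorem \ref{th1} with $a=b=0$. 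The main obstacle is precisely in choosing a doubling function that handles the near-field regime ($y_k$ bounded away from $0$ and $\infty$) and the far-field regime $|y_k|\to\infty$ simultaneously, ensuring the lower normalisation survives both limits; the strict subcriticality assumption (\ref{condi2}) is crucial because it renders the unweighted Liouville statement of Theorem \ref{th1} applicable in every blow-up scenario.
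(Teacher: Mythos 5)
Your proposal is a genuinely different route from the paper, which does not use any doubling--rescaling at all: the paper's proof of Theorem~\ref{th2} first invokes Proposition~\ref{th3} (decay estimates of the form $u(x)\le C|x|^{-\alpha-(a+bp)/(pq-1)}$, etc., proved by the method of \cite{Pha12}), then plugs these decay rates directly into the boundary terms $G_1(R)+G_2(R)$ of the Rellich--Pohozaev identity to show they vanish as $R\to\infty$. This is short and makes no appeal to Theorem~\ref{th1}. Your argument, by contrast, tries to reduce Theorem~\ref{th2} to Theorem~\ref{th1} by blow-up, and that reversal of logical dependence introduces a circularity that I do not see how to break. Look at Step~1 of the paper's proof of Theorem~\ref{th1}: for $N\ge 2m+2$ the author assumes WLOG $p\ge q$ and reduces to $p\ge (N+2m)/(N-2m)$, and in the remaining corner case $q\le p<(N+2m)/(N-2m)$ the proof of Theorem~\ref{th1} explicitly \emph{invokes Theorem~\ref{th2}}. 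That corner case is nonempty under your hypotheses (e.g.\ $q$ close to $1$, $p$ below $p_S$ but large enough that (\ref{condi1}) holds, $N\ge 2m+3$). In exactly that regime your first step --- ``if $(u,v)$ is bounded, apply Theorem~\ref{th1}'' --- and your blow-up step --- ``apply Theorem~\ref{th1} with $a=b=0$ to the limit, which has the same $(p,q)$'' --- both route back through Theorem~\ref{th2}. You would need an independent Liouville theorem for bounded solutions of the unweighted polyharmonic Lane--Emden system in dimension $N\ge 2m+3$ for both exponents subcritical; \cite{AYZ14} only covers $N\le 2m+2$, so you cannot get it off the shelf, and the only source for it in this paper is the very statement you are trying to prove.

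Two further remarks. First, even setting circularity aside, your hardest case $|y_k|\to\infty$ is only sketched: for H\'enon-type weights the doubling function must simultaneously encode the interior and far-field homogeneities, and you have not verified the doubling inequality for the weighted functional $N(x)$, nor that the rescaled coefficient $\bigl(|y_k+r_kz|/|y_k|\bigr)^a\to 1$ uniformly on compacts after the \emph{weighted} rescaling (the relevant scale is $s_k$, not $r_k$, so the cancellation needs to be re-derived). Second, passing to the limit for $(-\Delta)^m$ requires local $W^{2m,p}$/Schauder compactness; that is standard but must be stated, since you need the normalization $\tilde u(0)^{1/\alpha_0}+\tilde v(0)^{1/\beta_0}=1$ to survive and the limit to be nontrivial, which for a polyharmonic system without a maximum principle deserves a sentence. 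The paper's Pohozaev-plus-decay route sidesteps all of this, at the cost of quoting Proposition~\ref{th3} from earlier work; your route, if the circularity were repaired by supplying an independent unweighted Liouville theorem, would have the advantage of not needing the quantitative decay rates, but as written it does not stand on its own.
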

The proof of Theorem~\ref{th1} employs the technique of Souplet \cite{Sou09}, which is based on a combination of Rellich-Pohozaev identity, Sobolev and interpolation inequality on $S^{N-1}$ and feedback and measure arguments. However, we point out that significant additional difficulties arise in our case. For instance, the very technical measure and feedback arguments become even more complicated when we work on high polyharmonic operator, and the presence of different homogeneities of  weight functions and  makes the problem much more delicate. In the difference from the case in \cite{Faz14}, the failure of the Sobolev embedding $W^{2m,1+\varepsilon}\subset L^\infty$ on $S^{N-1}$ for dimensions $N\geq 2m+2$ makes the proof become significantly more difficult. Besides that, we can not derive the comparison  between components $u$ and $v$ due to the lack of maximum principle when dealing with high order operator, we follow the idea in \cite{AYZ14} to overcome this difficulty. 

Theorem~\ref{th1} is still true for polynomially bounded solutions, i.e. if $u(x)+ v(x)\leq C|x|^s$ for $x$ large, with some $s>0$. This follows from easy modifications of the proof. Let us recall that Liouville type theorems for bounded solutions are usually sufficient for applications such as a priori estimates and universal bounds, obtained by rescaling arguments (see \cite{GS81b, PQS07}).

The proof of Theorem~\ref{th2} is based on a combination of Rellich-Pohozaev identity and decay estimates of solutions which is derived from Liouville-type results for polyharmonic Lane-Emden system. Interestingly, no boundedness assumption on solutions is required.

The rest of paper is organized as follows. In Section 2, we recall some functional inequalities, Rellich-Pohozaev identity. Section 3 is devoted to the proof of Theorem \ref{th1}. The proof of Theorem \ref{th2} is  given in Section 4.

\section{Preliminaries}
For $R>0$, we set $B_R=\{x\in {\mathbb R}^N;\ |x|<R\}$.
We shall use spherical coordinates $r=|x|$, $\theta=x/|x| \in S^{N-1}$ and write $u=u(r,\theta)$. 
The surface measures on $S^{N-1}$ and on the sphere
$\{x\in {\mathbb R}^N;\ |x|=R\}$, $R>0$, will be denoted respectively by $d\theta$ and by $d\sigma_R$.
For given function $w=w(\theta)$ on $S^{N-1}$ and $1\leq k\leq\infty$, we set
$\|w\|_k=\|w\|_{L^k(S^{N-1})}$. 
When no confusion is likely, we shall denote
$\|u\|_k=\|u(r,\cdot)\|_k$.

\subsection{Some functional inequalities}
We recall some following fundamental interpolation inequalities and elliptic estimates.
\begin{lemma}[Sobolev inequalities on $S^{N-1}$]\label{1b}
Let $N\geq 2, j\geq 1$ is integer and $1<k<\lambda \leq\infty$, $k\ne (N-1)/j$. For $w=w(\theta)\in W^{j,k}(S^{N-1})$, we have
\begin{align*}
\|w\|_\lambda \leq C(\|D^j_\theta w\|_k+\|w\|_1)
\end{align*}
where 
\begin{align*}
\begin{cases}
&\frac{1}{k}-\frac{1}{\lambda}=\frac{j}{N-1}, \text{ if } k<(N-1)/j,\\
&\lambda=\infty \quad\quad\quad\, \text{ if } k>(N-1)/j,
 \end{cases}
\end{align*}
and $C=C(j,k,N)>0$.
 \end{lemma}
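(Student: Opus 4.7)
The plan is to reduce the assertion to the classical Sobolev embedding on Euclidean $\mathbb{R}^{N-1}$ through a partition of unity on the compact manifold $S^{N-1}$, and then upgrade the resulting lower-order term from $\|w\|_k$ to $\|w\|_1$ by a soft compactness argument.

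First, I would cover $S^{N-1}$ by finitely many coordinate patches $\{U_\alpha\}$ with a subordinate smooth partition of unity $\{\phi_\alpha\}$. Pushing each $\phi_\alpha w$ forward under its chart gives a compactly supported function on $\mathbb{R}^{N-1}$, to which the classical Gagliardo-Nirenberg-Sobolev inequality applies: $W^{j,k}(\mathbb{R}^{N-1})\hookrightarrow L^\lambda(\mathbb{R}^{N-1})$ with $\frac{1}{k}-\frac{1}{\lambda}=\frac{j}{N-1}$ when $k<(N-1)/j$, and $W^{j,k}\hookrightarrow L^\infty$ (Morrey) when $k>(N-1)/j$. Commuting derivatives with the cutoffs $\phi_\alpha$ introduces only lower-order derivatives of $w$ paired with bounded factors, so those terms are absorbed into the full $W^{j,k}$-norm. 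Summing over $\alpha$ and invoking the smooth equivalence of chart-based and intrinsic norms yields the baseline estimate
\[
\|w\|_\lambda \leq C\bigl(\|D^j_\theta w\|_k+\|w\|_k\bigr).
\]

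The second step is to replace $\|w\|_k$ by $\|w\|_1$, which reduces to proving the auxiliary inequality $\|w\|_k\leq C(\|D^j_\theta w\|_k+\|w\|_1)$. I would argue by contradiction: if it failed, there would exist $w_n\in W^{j,k}(S^{N-1})$ with $\|w_n\|_k=1$, $\|D^j_\theta w_n\|_k\to 0$ and $\|w_n\|_1\to 0$. The Rellich-Kondrachov compactness of $W^{j,k}(S^{N-1})\hookrightarrow L^k(S^{N-1})$ then extracts a subsequence converging in $L^k$ to some $w_\infty$ with $\|w_\infty\|_k=1$, $\|w_\infty\|_1=0$, and $D^j_\theta w_\infty=0$ distributionally. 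The kernel of $D^j_\theta$ on $S^{N-1}$ is a finite-dimensional space of smooth functions (by elliptic regularity, or directly by expansion in spherical harmonics), on which all $L^p$-norms are equivalent, forcing $w_\infty\equiv 0$ and contradicting $\|w_\infty\|_k=1$.

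The main technical point is this kernel/compactness step; everything else is a standard localization. The exclusion $k\neq (N-1)/j$ is exactly what is needed to stay out of the critical Sobolev exponent, so that the dichotomy between the $L^\lambda$ and $L^\infty$ cases in the classical Euclidean embedding is clean. Combining the baseline estimate with the auxiliary inequality delivers the lemma.
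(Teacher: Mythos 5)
The paper does not give a proof of this lemma: it is stated in the ``Preliminaries'' section as a recalled standard fact (it is essentially Souplet's Sobolev interpolation lemma on $S^{N-1}$), so there is no paper argument to compare against line by line. Your proof is a correct, self-contained derivation of it, and it follows the route one would expect: localize via a finite partition of unity to invoke the Euclidean Gagliardo--Nirenberg--Sobolev/Morrey embeddings, then downgrade the lower-order term from $L^k$ to $L^1$ by a compactness contradiction argument.

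Two small remarks on the write-up. First, in the localization step you land on $\|w\|_\lambda\le C\|w\|_{W^{j,k}}$, and passing to $\|w\|_\lambda\le C(\|D^j_\theta w\|_k+\|w\|_k)$ additionally uses the interpolation inequality $\|D^i_\theta w\|_k\le C(\|D^j_\theta w\|_k+\|w\|_k)$ for $0<i<j$ to control the intermediate derivatives produced by the cutoffs; this is standard on a compact manifold but worth stating, since it is exactly where the hypothesis $j\ge 1$ and the full scale of derivatives enter. Second, the kernel-of-$D^j_\theta$ discussion at the end is not actually needed: once you have $w_n\to w_\infty$ in $L^k(S^{N-1})$ with $\|w_n\|_1\to 0$, the finite measure of $S^{N-1}$ gives $\|w_\infty\|_1=0$ directly, hence $w_\infty=0$ a.e., contradicting $\|w_\infty\|_k=1$. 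Invoking finite-dimensionality of $\ker D^j_\theta$ is a valid alternative but adds an unnecessary layer (and its justification, via irreducibility of the holonomy of the round sphere or via overdetermined-elliptic regularity, is more delicate than the rest of the argument). With these cosmetic adjustments the proof is complete and correct.
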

The next two lemmas follow from the standard estimates for $R=1$ and an obvious dilation argument. The proof of Lemma~\ref{lem2a} makes use of standard elliptic $L^p$- estimates for second order elliptic equations and interpolation inequalities.
\begin{lemma}[Elliptic $L^p$- estimates on an annulus]\label{lem2a}
Let $1<k<\infty$. For $R>0$ and $z=z(x)\in W^{2m,k}(B_{2R}\setminus B_{R/4})$, we have
\begin{align*}
\int\limits_{B_R\setminus B_{R/2}}|D^{2m}_xz|^kdx\leq C\bigg(\int\limits_{B_{2R}\setminus B_{R/4}}|\Delta^m z|^kdx+R^{-2mk} \int\limits_{B_{2R}\setminus B_{R/4}}|z|^kdx\bigg),
\end{align*}
with $C=C(m,N,k)>0$.
\end{lemma}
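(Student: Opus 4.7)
The plan is to reduce to the normalized scale $R=1$ by a dilation argument and to handle the $R=1$ case by invoking classical interior $L^p$-theory for elliptic operators of order $2m$ combined with Gagliardo-Nirenberg interpolation.

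For arbitrary $R>0$, I would introduce $\tilde z(y):=z(Ry)$. Differentiation gives $\Delta_y^m\tilde z(y)=R^{2m}(\Delta_x^m z)(Ry)$ and $D_y^{2m}\tilde z(y)=R^{2m}(D_x^{2m}z)(Ry)$. Applying the $R=1$ estimate to $\tilde z$ on the pair of annuli $B_1\setminus B_{1/2}\Subset B_2\setminus B_{1/4}$ and performing the change of variables $x=Ry$ (contributing a Jacobian $R^{-N}$) produces, after the common factor $R^{2mk-N}$ cancels on both sides, exactly the stated inequality — with the weight $R^{-2mk}$ on the $|z|^k$ term emerging automatically because that term lacks the compensating $R^{2mk}$ from $2m$ spatial derivatives. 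This reduces everything to the $R=1$ case.

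For $R=1$, the target is the classical interior $L^p$ estimate for the polyharmonic operator,
$$
\|z\|_{W^{2m,k}(B_1\setminus B_{1/2})} \le C\bigl(\|\Delta^m z\|_{L^k(B_2\setminus B_{1/4})} + \|z\|_{L^k(B_2\setminus B_{1/4})}\bigr),
$$
which in particular controls $\|D^{2m}z\|_{L^k}$. I would derive this by induction on $m$ from the standard second-order Calder\'on-Zygmund estimate $\|D^2 w\|_{L^k(\Omega')}\le C(\|\Delta w\|_{L^k(\Omega)}+\|w\|_{L^k(\Omega)})$ for $\Omega'\Subset\Omega$ and $1<k<\infty$. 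Fixing a strictly decreasing chain of annuli $B_1\setminus B_{1/2}=A_m\Subset A_{m-1}\Subset\cdots\Subset A_0=B_2\setminus B_{1/4}$ and applying the second-order estimate in turn to $w=\Delta^{m-1}z$, $\Delta^{m-2}z,\ldots,z$ (moving one annulus outward at each step), one chains the inequalities to obtain a bound on $\|D^{2m}z\|_{L^k(A_m)}$ in terms of $\|\Delta^m z\|_{L^k(A_0)}$ plus intermediate quantities of the form $\|D^{2j}z\|_{L^k(A_0)}$ for $0\le j\le m-1$. Gagliardo-Nirenberg interpolation $\|D^{2j}z\|_{L^k}\le\eps\|D^{2m}z\|_{L^k}+C(\eps)\|z\|_{L^k}$, applied on an annulus slightly larger than $A_m$, then absorbs each intermediate term into the left-hand side for $\eps$ small enough, yielding the desired bound involving only $\Delta^m z$ and $z$.

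No conceptual obstacle is expected: this is a routine piece of elliptic regularity, entirely in line with the paper's description of the proof as relying on \emph{standard elliptic $L^p$-estimates for second order elliptic equations and interpolation inequalities}. The main care required is bookkeeping — ensuring the nested family $\{A_j\}$ has positive gaps so that each Calder\'on-Zygmund step is valid, and that the interpolation inequality is applied on a strictly larger domain than the one where the absorption is performed, so that the constants close without circularity.
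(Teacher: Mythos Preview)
Your proposal is correct and follows essentially the same approach as the paper: the paper does not give a detailed proof but states only that the lemma ``follow[s] from the standard estimates for $R=1$ and an obvious dilation argument'' and that the $R=1$ case ``makes use of standard elliptic $L^p$-estimates for second order elliptic equations and interpolation inequalities'', which is exactly the dilation-plus-second-order-iteration-plus-interpolation scheme you outline.
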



\subsection{Basic estimates and Rellich-Pohozaev identity}

For the sake of simplicity, we denote by 
\begin{align*}
&k=\frac{p+1}{p},\quad h=\frac{q+1}{q},\\
&u_i=\Delta^i u, v_i=\Delta^i v,\quad i=0,1,..., m,\\ 
&F(R)=\int_{B_R}(|x|^bu^{q+1}+|x|^av^{q+1})dx.
\end{align*}
We set
\begin{align*}
\alpha=\frac{2m(p+1)}{pq-1},\quad \beta=\frac{2m(q+1)}{pq-1}.
\end{align*}
Then the condition (\ref{hyperbola}) in Conjecture A is equivalent to 
\begin{align}\label{hyperbola2}
\alpha+\beta+2m-N+\frac{b}{2m}\alpha+\frac{a}{2m}\beta>0.
\end{align}
 We have the following basic integral estimates for solutions of (\ref{1}). 

\begin{lemma}[\cite{MP01, Faz14}]\label{lem4a}
Let $p,q\geq 1$, $pq\ne 1$, $a,b\geq 0$, and $(u,v)$ be a positive solution of (\ref{1}) in $\Omega={\mathbb R}^N$. Then for any $R>0$,  there hold
\begin{align}\label{01}
&\int\limits_{B_R\setminus B_{R/2}}|x|^av^p\,dx\le C R^{N-2m-\alpha-\frac{a+bp}{pq-1}}, \;\;\; 
   \int\limits_{B_R\setminus B_{R/2}}|x|^bu^qdx\le C R^{N-2m-\beta-\frac{b+aq}{pq-1}},
\end{align} 
with $C=C(N,m,p,q,a,b)>0$.
\end{lemma}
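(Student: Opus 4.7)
The plan is to use the classical test function method of Mitidieri--Pohozaev. Fix a cutoff $\varphi\in C^\infty_c(\mathbb{R}^N)$ with $0\le \varphi\le 1$, $\varphi\equiv 1$ on $B_R\setminus B_{R/2}$, $\mathrm{supp}\,\varphi\subset B_{2R}\setminus B_{R/4}$, and $|D^k\varphi|\le CR^{-k}$ for $k=0,1,\dots,2m$. For an integer $s$ chosen sufficiently large (say $s\ge 2m\max(p',q')$), set $\psi=\varphi^s$. A direct Leibniz computation yields the pointwise bound $|(-\Delta)^m\psi|\le CR^{-2m}\varphi^{s-2m}$. Multiplying the two equations of (\ref{1}) by $\psi$ and integrating by parts over $\mathbb{R}^N$ (all boundary terms vanish since $\psi$ has compact support) gives
\begin{align*}
I_1:=\int|x|^av^p\psi\,dx=\int u(-\Delta)^m\psi\,dx,\qquad I_2:=\int|x|^bu^q\psi\,dx=\int v(-\Delta)^m\psi\,dx.
\end{align*}

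Next I split $u(-\Delta)^m\psi=(u|x|^{b/q}\psi^{1/q})\cdot(|x|^{-b/q}\psi^{-1/q}(-\Delta)^m\psi)$ and apply H\"older with exponents $q$ and $q'=q/(q-1)$; a symmetric step handles the second identity. This yields the coupled inequalities
\begin{align*}
I_1\le I_2^{1/q}J_2^{1/q'},\qquad I_2\le I_1^{1/p}J_1^{1/p'},
\end{align*}
where
\begin{align*}
J_1=\int|x|^{-ap'/p}\psi^{-p'/p}|(-\Delta)^m\psi|^{p'}dx,\qquad J_2=\int|x|^{-bq'/q}\psi^{-q'/q}|(-\Delta)^m\psi|^{q'}dx.
\end{align*}
Using the Leibniz bound for $(-\Delta)^m\psi$ and the fact that $|x|\sim R$ on $\mathrm{supp}\,\varphi$, together with the volume estimate $|B_{2R}\setminus B_{R/4}|\sim R^N$, one obtains $J_1\le CR^{N-(a+2mp)/(p-1)}$ and $J_2\le CR^{N-(b+2mq)/(q-1)}$.

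Iterating the two H\"older bounds gives $I_1^{(pq-1)/(pq)}\le CJ_1^{1/(p'q)}J_2^{1/q'}$, hence
\begin{align*}
I_1\le CJ_1^{(p-1)/(pq-1)}J_2^{p(q-1)/(pq-1)}.
\end{align*}
Summing the $R$-exponents produces
\begin{align*}
\frac{N(p-1)-a-2mp}{pq-1}+\frac{p(N(q-1)-b-2mq)}{pq-1}=N-\frac{a+bp}{pq-1}-\frac{2mp(q+1)}{pq-1},
\end{align*}
and since $2m+\alpha=2mp(q+1)/(pq-1)$ this reduces exactly to $N-2m-\alpha-(a+bp)/(pq-1)$. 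As $\psi\equiv 1$ on $B_R\setminus B_{R/2}$, this gives the first estimate in (\ref{01}); the second is obtained in the same way by interchanging the roles of the two equations. The main obstacle is bookkeeping: one must verify the Leibniz bound $|(-\Delta)^m\varphi^s|\le CR^{-2m}\varphi^{s-2m}$ and carefully track the algebra of the coupled H\"older step to land on the stated exponent. The borderline cases $p=1$ or $q=1$ (where $p'$ or $q'$ is infinite) need the obvious modification of replacing the corresponding H\"older step by a trivial $L^\infty$ bound on $|(-\Delta)^m\psi|$.
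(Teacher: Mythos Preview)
Your argument is correct and is precisely the classical Mitidieri--Pohozaev test function method that the paper cites (the paper itself does not give a proof of this lemma, referring instead to \cite{MP01, Faz14}). The coupled H\"older iteration and the exponent algebra are carried out accurately, and your remark on the borderline cases $p=1$ or $q=1$ is the standard fix.
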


By applying Lemma~\ref{lem2a}--\ref{lem4a} and the boundedness of $(u,v)$, we obtain the following estimates on the derivatives of $u$ and $v$.

\begin{lemma}\label{lem}
Let $p,q\geq1$, $pq\ne 1$, $a,b\geq 0$, and $(u,v)$ be a bounded positive solution of (\ref{1}), there exists a constant $C$ independent of $R$ such that 
\begin{align}
&\begin{cases} \int\limits_{B_R\setminus B_{R/2}}|D_x^{2m} u|^{1+\varepsilon}dx\le C R^{N-2m-\alpha-\frac{a+bp}{pq-1}+a\varepsilon} , \\
   \int\limits_{B_R\setminus B_{R/2}}|D^{2m}_x v|^{1+\varepsilon}dx\le C R^{N-2m-\beta-\frac{b+aq}{pq-1}+b\varepsilon},\end{cases}\label{e1}\\
 &\begin{cases}\int\limits_{B_R\setminus B_{R/2}}|D_x^{2m} u|^{k}dx\le C R^{\frac{a}{p}}F(2R) , \\
   \int\limits_{B_R\setminus B_{R/2}}|D^{2m}_x v|^{h}dx\le C R^{\frac{b}{p}} F(2R), \end{cases}\label{e2}\\
 &\begin{cases}\int\limits_{B_R\setminus B_{R/2}}|u_i|dx\le C R^{N-2i-\alpha-\frac{a+bp}{pq-1}},\quad i=0,1,...,m, \\
\int\limits_{B_R\setminus B_{R/2}}|v_i|dx\le C R^{N-2i-\beta-\frac{b+aq}{pq-1}},\quad i=0,1,...,m, \end{cases}\label{e3}\\
 &\begin{cases}\int\limits_{B_R\setminus B_{R/2}}|D_xu_j|dx\le C R^{N-2j-1-\alpha-\frac{a+bp}{pq-1}},j=0,..., m-1,\\
\int\limits_{B_R\setminus B_{R/2}}|D_xv_j|dx\le C R^{N-2j-1-\beta-\frac{b+aq}{pq-1}}, j=0,..., m-1. \end{cases}\label{e4}
  \end{align}
\end{lemma}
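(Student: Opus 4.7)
My plan is to derive each of the four families of estimates by applying the annular elliptic regularity bound of Lemma~\ref{lem2a} to $z=u$ (or $z=v$) with a suitable exponent $k$, and then controlling the two pieces on the right-hand side---the source term $\int|\Delta^m z|^k$ and the lower-order term $R^{-2mk}\int|z|^k$---by combining Lemma~\ref{lem4a} with the standing boundedness of $(u,v)$. Because Lemma~\ref{lem2a} requires $k>1$, the $L^1$-bounds in (\ref{e3})--(\ref{e4}) have to be reached through a brief $L^{1+\varepsilon}$ detour followed by H\"older's inequality, and the two extreme cases of (\ref{e3}) ($i=m$ and $i=0$) have to be obtained separately in order to feed the interior steps.

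For (\ref{e1}), I would pick $k=1+\varepsilon$. Since $(-\Delta)^mu=|x|^av^p$ and $v$ is bounded, on the annulus $B_{2R}\setminus B_{R/4}$ one has $|\Delta^m u|^{1+\varepsilon}\leq CR^{a\varepsilon}|x|^av^p$, and Lemma~\ref{lem4a} then bounds the source piece by $CR^{a\varepsilon+N-2m-\alpha-(a+bp)/(pq-1)}$, which is precisely the target right-hand side. The lower-order piece $R^{-2m(1+\varepsilon)}\int|u|^{1+\varepsilon}$ is absorbed by writing $|u|^{1+\varepsilon}\leq C|u|$ and inserting the $L^1$-bound (\ref{e3}) at $i=0$; an elementary exponent comparison shows that the result is dominated by the source piece. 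For (\ref{e2}) the same scheme with $k=(p+1)/p$ (respectively $h=(q+1)/q$) works because $|\Delta^m u|^k=|x|^{a/p}\cdot|x|^av^{p+1}$, so the source piece is immediately bounded by $CR^{a/p}F(2R)$, and the lower-order piece is disposed of by H\"older against the definition of $F$.

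For (\ref{e3}) and (\ref{e4}), the two endpoint cases of (\ref{e3}) are settled first: $i=m$ is Lemma~\ref{lem4a} applied directly to $u_m=\pm|x|^av^p$, while $i=0$ follows from Lemma~\ref{lem4a} together with one H\"older step (write $\int u\leq(\int u^q)^{1/q}|B_R|^{1-1/q}$ and remove the weight from the bound on $\int|x|^bu^q$ using $|x|^b\geq(R/2)^b$ on the annulus). Intermediate values of $i$ in (\ref{e3}) and the derivative bounds (\ref{e4}) then come from the rescaled interior interpolation inequality
\[
\int_{B_R\setminus B_{R/2}}|D^\ell u|^{1+\varepsilon}\,dx\leq C\Bigl(R^{(2m-\ell)(1+\varepsilon)}\int_{B_{2R}\setminus B_{R/4}}|\Delta^m u|^{1+\varepsilon}\,dx+R^{-\ell(1+\varepsilon)}\int_{B_{2R}\setminus B_{R/4}}|u|^{1+\varepsilon}\,dx\Bigr),
\]
valid for $0\leq\ell\leq 2m$ and $\varepsilon>0$ small, which follows from Lemma~\ref{lem2a} by a standard Gagliardo-Nirenberg interior interpolation; one plugs in the previously obtained bounds on $|\Delta^m u|^{1+\varepsilon}$ and on $|u|^{1+\varepsilon}$, and passes from $L^{1+\varepsilon}$ down to $L^1$ by H\"older on the annulus. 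The estimates for $v_i$ and $D_xv_j$ come out of the symmetric argument with the roles of $(u,v)$ and $(a,b,p,q)$ swapped.

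The principal technical obstacle is purely the exponent bookkeeping: at each step one must verify that the $R^{O(\varepsilon)}$ losses coming from the $L^{1+\varepsilon}$-to-$L^1$ passage and from the crude handling of the weights $|x|^a,|x|^b$ are absorbed by the leading power, and one must check that the cluttered exponents $\alpha,\beta,(a+bp)/(pq-1),(b+aq)/(pq-1)$ line up exactly as asserted. These verifications are routine but repetitive, and are made more involved by the asymmetric scalings of the two components of the system.
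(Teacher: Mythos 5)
Your treatment of \eqref{e1} and \eqref{e2} agrees, essentially line for line, with the paper: \eqref{e1} comes from Lemma~\ref{lem2a} with exponent $1+\varepsilon$, the identity $|\Delta^m u|^{1+\varepsilon}=|x|^{a+a\varepsilon}v^{p+p\varepsilon}$, boundedness of $v$, and Lemma~\ref{lem4a}; \eqref{e2} comes from the same with exponent $k=(p+1)/p$, the source term giving $R^{a/p}F(2R)$ directly, and the lower-order term handled by H\"older against $\int u^{q+1}$. (One remark here: the exponent bookkeeping you defer to is not quite ``routine'' for \eqref{e2} -- the paper verifies $\eta_1<a$, and this verification is precisely where the hypothesis \eqref{hyperbola}, through its equivalent form \eqref{hyperbola2}, enters; it is worth noticing this dependence since the Lemma statement does not display it.) Likewise your two endpoint bounds for \eqref{e3} -- $i=m$ directly from Lemma~\ref{lem4a}, and $i=0$ via H\"older after stripping the weight $|x|^b$ -- are correct and standard.

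The genuine gap is in your route to the intermediate cases of \eqref{e3} and to all of \eqref{e4}. You propose to establish an $L^{1+\varepsilon}$ bound for $|D^{2i}_x u|$ on the annulus (via the rescaled Gagliardo--Nirenberg interpolation, which indeed needs $1+\varepsilon>1$) and then descend to $L^1$ by H\"older over the annulus, whose measure is $\sim R^N$. This H\"older step costs a factor $R^{N\varepsilon/(1+\varepsilon)}$, and the $L^{1+\varepsilon}$ bound itself already carries an excess $R^{\varepsilon(2m-2i+a)}$ above the target power. Writing $E=N-2i-\alpha-\tfrac{a+bp}{pq-1}$ for the desired exponent, the two effects combine to give
\[
\int_{B_R\setminus B_{R/2}}|u_i|\,dx \;\leq\; C\,R^{\,E+\frac{\varepsilon}{1+\varepsilon}\bigl(2m+a+\alpha+\frac{a+bp}{pq-1}\bigr)},
\]
and the excess exponent $2m+a+\alpha+\tfrac{a+bp}{pq-1}$ is strictly positive (since $m\geq 1$, $\alpha>0$). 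Since the constant in the interior interpolation blows up as $\varepsilon\to0^+$, you cannot pass to the limit to remove this excess. Thus your argument establishes \eqref{e3}--\eqref{e4} only up to an $R^{O(\varepsilon)}$ loss, not in the sharp form stated, and these bounds are then invoked without $\varepsilon$-slack in the measure argument of Step 4 (the sets $\Gamma_{5i},\dots,\Gamma_{8j}$).

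The reason the sharp bounds are nonetheless true is that the failure of the $L^1$ Calder\'on--Zygmund theory is only for the \emph{top-order} derivatives $D^{2m}_xu$; the \emph{lower-order} quantities $\Delta^i u$ ($i<m$) and $\nabla\Delta^j u$ ($j<m$) are controlled in $L^1$ directly by $\|\Delta^m u\|_{L^1}+\|u\|_{L^1}$. This is exactly what the paper uses: for \eqref{e4} it invokes the $L^1$ gradient estimate of \cite[Lemma~4.3]{PhS},
\[
\int_{B_R\setminus B_{R/2}}|D_xu_j|\,dx\leq CR\!\!\int_{B_{2R}\setminus B_{R/4}}\!\!|u_{j+1}|\,dx+ CR^{-1}\!\!\int_{B_{2R}\setminus B_{R/4}}\!\!|u_j|\,dx,
\]
and the analogous $L^1$ estimate for $\Delta^i u$ for \eqref{e3}. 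Working in $L^1$ from the start avoids the H\"older loss entirely. Replace your $L^{1+\varepsilon}$ interpolation by these $L^1$ lower-order elliptic/interpolation estimates (which can be proved, e.g., by splitting into a Newtonian potential piece and a polyharmonic remainder), and the rest of your plan goes through.
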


\begin{proof}
By using Lemmas \ref{lem2a}, \ref{lem4a}, and the boundedness of $(u,v)$, we have
\begin{align*}
 \int\limits_{B_R\setminus B_{R/2}}|D_x^{2m}u|^{1+\eps}dx&\leq 
C\int\limits_{B_{2R}\setminus B_{R/4}}|\Delta^m u|^{1+\eps}dx
+CR^{-2m(1+\eps)} \int\limits_{B_{2R}\setminus B_{R/4}}u^{1+\eps}dx\\
&\leq 
C\int\limits_{B_{2R}\setminus B_{R/4}}|x|^{a+a\eps}v^{p+p\eps}dx
+CR^{-2m(1+\eps)}\int\limits_{B_{2R}\setminus B_{R/4}}u^{1+\eps}dx\\
&\leq 
CR^{a\eps}\int\limits_{B_{2R}\setminus B_{R/4}}|x|^av^pdx
+CR^{-2m(1+\eps)}\int\limits_{B_{2R}\setminus B_{R/4}}u\,dx\\
&\leq CR^{N-2m-\alpha-\frac{a+bp}{pq-1}+ a\eps}+CR^{N-2m-\alpha-\frac{a+bp}{pq-1}-2m\eps} \\
&\leq CR^{N-2-\alpha-\frac{a+bp}{pq-1}+ a\eps}.
\end{align*}
The second inequality of (\ref{e1}) holds by similar calculation. Next,
\begin{align*}
\int\limits_{B_R\setminus B_{R/2}}|D_x^{2m}u|^kdx&\leq C\bigg(\int\limits_{B_{2R}\setminus B_{R/4}}|\Delta^m u|^kdx+ R^{-2mk}\int\limits_{B_{2R}\setminus B_{R/4}}u^kdx\bigg)\\
&= C\bigg(\int\limits_{B_{2R}\setminus B_{R/4}}|x|^{ka}v^{p+1}dx+ R^{-2mk}\int\limits_{B_{2R}\setminus B_{R/4}}u^kdx\bigg)\\
&\leq  C\bigg(R^{a/p}F(2R)+ R^{-2mk}\int\limits_{B_{2R}\setminus B_{R/4}}u^kdx\bigg).
\end{align*}
By H\"older inequality, for $R>1$, we have
\begin{align*}
A_1&=R^{-2mk}\int\limits_{B_{2R}\setminus B_{R/4}}u^kdx\\
&\leq CR^{-2mk}R^{N(pq-1)/p(q+1)}\bigg(\int\limits_{B_{2R}\setminus B_{R/4}}u^{q+1}dx\bigg)^{(p+1)/p(q+1)}\\
&\leq C R^{\eta_1/p}F(2R),
\end{align*}
with $\eta_1=-2m(p+1)+N(pq-1)/(q+1)- b(p+1)/(q+1)$, where we used $(p+1)/p(q+1)<1$, along with 
\begin{align*}
F(R)\geq F(1)>0, \quad R>1.
\end{align*}
We have to show that $\eta_1<a$. Indeed,
\begin{align*}
a-\eta_1 &=2m(p+1)-N\frac{pq-1}{q+1}+ b\frac{p+1}{q+1}+a\\
&=2m(p+1)-N\frac{pq-1}{q+1}+ b\frac{p(q+1)-(pq-1)}{q+1}+a\\
&=\frac{2m}{\beta}\left((p+1)\beta-N+ \frac{b}{2m}p\beta-b+\frac{a}{2m}\beta\right)\\
&=\frac{2m}{\beta}\left(2m+\alpha+\beta-N+ \frac{b}{2m}\alpha+\frac{a}{2m}\beta\right)>0.
\end{align*}
The second inequality of (\ref{e2}) holds by similar calculation. 

Inequalities (\ref{e3}) follow from Lemmas \ref{lem4a} and regularity estimate for Laplace equation and interpolation inequalities. For the proof of (\ref{e4}), it follow from \cite[Lemma 4.3]{PhS} that
\begin{align*}
\int\limits_{B_R\setminus B_{R/2}}|D_xu_j|dx\leq CR\int\limits_{B_{2R}\setminus B_{R/4}}|u_{j+1}|dx+ CR^{-1}\int\limits_{B_{2R}\setminus B_{R/4}}|u_j|dx. 
\end{align*}
This combined with (\ref{e3}) yields desired estimates.
\end{proof}

The following Rellich-Pohozaev identity plays a key role in the proof of Theorems \ref{th1} and \ref{th2}. The proof of this identity is standard and we refer to \cite{AYZ14, Faz14} for details.  
\begin{lemma}[Rellich-Pohozaev identity]\label{pohozaev}
Let $a_1, a_2\in {\mathbb R}$ satisfy $a_1+a_2=N-2m$ and $(u,v)$ solution of (\ref{1}), there holds 
\begin{align*}
\bigg(&\frac{N+a}{p+1}-a_1\bigg)\int\limits_{B_R}|x|^av^{p+1}dx+\bigg(\frac{N+b}{q+1}-a_2\bigg)\int\limits_{B_R}|x|^bu^{q+1}dx\\
=& \frac{1}{q+1}R^{1+b}\int\limits_{|x|=R}u^{q+1}d\sigma_R+\frac{1}{p+1}R^{1+a} \int\limits_{|x|=R}v^{p+1}d\sigma_R\\
&-(-1)^m\sum_{i=0}^{m-1}2R\int\limits_{|x|=R}\frac{\partial \Delta^i u}{\partial n}.\frac{\partial \Delta^{m-i-1} v}{\partial n}d\sigma_R\\
&-(-1)^m\sum_{i=0}^{m-1}R\int\limits_{|x|=R} (\nabla \Delta^i u,\nabla\Delta^{m-i-1} v)d\sigma_R\\
&-(-1)^m\sum_{i=0}^{m-2}R\int\limits_{|x|=R}  \Delta^{i+1} u.\Delta^{m-i-1} vd\sigma_R\\
&+(-1)^m\sum_{i=0}^{m-1}(2m-2i-2+a_1)\int\limits_{|x|=R} \frac{\partial \Delta^i u}{\partial n}.\Delta^{m-i-1} vd\sigma_R\\
&+(-1)^m\sum_{i=0}^{m-1}(2i+a_2)\int\limits_{|x|=R}\Delta^{i} u. \frac{\partial \Delta^{m-i-1} v}{\partial n}d\sigma_R.
\end{align*}
\end{lemma}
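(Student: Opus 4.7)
The plan is to derive the identity by the classical multiplier method. I would multiply the first equation of (\ref{1}) by $\phi_1 := x\cdot\nabla v + a_1 v$ and the second by $\phi_2 := x\cdot\nabla u + a_2 u$, integrate each over $B_R$ and add, then evaluate the result in two ways. On the ``nonlinear'' side, substituting $(-\Delta)^m u=|x|^a v^p$ and using
\[
x\cdot\nabla\bigl(|x|^a v^{p+1}\bigr)=a|x|^a v^{p+1}+(p+1)|x|^a v^p(x\cdot\nabla v)
\]
together with the divergence theorem gives
\[
\int_{B_R}|x|^a v^p\phi_1\,dx=\Bigl(a_1-\tfrac{N+a}{p+1}\Bigr)\int_{B_R}|x|^a v^{p+1}dx+\tfrac{R^{1+a}}{p+1}\int_{|x|=R}v^{p+1}d\sigma_R,
\]
and the symmetric identity for the second equation. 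Adding and transposing the volume integrals produces exactly the left-hand side of the stated identity together with the first two boundary terms on its right-hand side.

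The work is on the ``linear'' side, where I would peel the Laplacians off $u,v$ and onto $\phi_1,\phi_2$ by iterated application of the second Green identity
\[
\int_{B_R}\Delta w_1\cdot w_2\,dx=\int_{B_R}w_1\Delta w_2\,dx+\int_{|x|=R}\bigl(\partial_n w_1\cdot w_2-w_1\cdot\partial_n w_2\bigr)d\sigma_R.
\]
The key algebraic tool is the commutator $\Delta^i(x\cdot\nabla v)=x\cdot\nabla(\Delta^iv)+2i\,\Delta^iv$, which controls how the $x\cdot\nabla$ part of the test function interacts with the polyharmonic operator at each step. Each of the $m$ integration-by-parts steps produces boundary pieces of the five types listed in the statement, and after decomposing $\nabla$ on the sphere into normal and tangential components these contributions match the five sums on the right-hand side. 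The condition $a_1+a_2=N-2m$ enters as the exact balance that forces the residual bulk integrals to vanish: the $N$ produced by $\nabla\cdot x=N$ and the $2m$ accumulated from the $m$ commutator shifts combine with the $a_1v$ and $a_2u$ contributions so that only boundary terms survive.

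The main obstacle is the bookkeeping. Because each of the $m$ iterations introduces new boundary pieces of several types, one has to track carefully how the coefficients $2m-2i-2+a_1$ and $2i+a_2$ accumulate through successive commutator applications: each step contributes a shift of $+2$, and the $a_1,a_2$ pieces appear only at the final step, combining into the closed forms above. I would organise the proof as an induction on $m$, with the base case $m=1$ being the classical Pohozaev identity for (\ref{1a}); the inductive step then introduces precisely one new term of each of the first three boundary types and shifts the indices in the last two sums in a telescoping fashion, matching the structure of the statement.
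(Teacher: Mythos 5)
The paper provides no proof of this lemma: it says ``the proof of this identity is standard and we refer to \cite{AYZ14, Faz14} for details.'' Your outline is indeed the standard Rellich--Pohozaev multiplier method that those references employ: test the equations against $x\cdot\nabla v+a_1v$ and $x\cdot\nabla u+a_2u$, evaluate the nonlinear side by the divergence theorem, and peel the Laplacians off by iterated Green identities using the commutator $\Delta^i(x\cdot\nabla w)=x\cdot\nabla\Delta^iw+2i\,\Delta^iw$. Your computation of the nonlinear side is correct, the commutator identity is correct, and the mechanism you identify for the bulk cancellation (the balance $N-2m-a_1-a_2=0$) is the right one.

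There are, however, two concrete issues that keep this from being a proof. First, the sentence ``peel the Laplacians off $u,v$ and onto $\phi_1,\phi_2$'' is ambiguous in a way that matters. If you literally move all $m$ Laplacians onto $\phi_1$, you arrive at $\int_{B_R}u\,\Delta^m\phi_1\,dx$; substituting $\Delta^m v=(-1)^m|x|^bu^q$ at that point reintroduces the nonlinearity (now in $u$ rather than $v$), and the resulting algebraic relation, while true, does \emph{not} produce the boundary coefficients $\tfrac{1}{p+1}R^{1+a}\int v^{p+1}$ and $\tfrac{1}{q+1}R^{1+b}\int u^{q+1}$ that appear in the lemma. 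What is actually needed is a balanced splitting (integrate by parts roughly $m/2$ times in each factor, or equivalently write the sum telescopically so that the residual bulk term has the symmetric form $\int x\cdot\nabla(\,\cdot\,)$ in a pairing $\nabla\Delta^j u\cdot\nabla\Delta^{m-1-j}v$ or $\Delta^j u\cdot\Delta^{m-j}v$). Only in that balanced form does the pairing between the two equations produce $x\cdot\nabla(\nabla\Delta^j u\cdot\nabla\Delta^{m-1-j}v)$, whose integration over $B_R$ yields the factor $a_1+a_2-(N-2m)$ multiplying a bulk integral, which is then killed by the hypothesis. You should already see this distinction at $m=1$: a single Green step to $\int\nabla u\cdot\nabla\phi_1+\nabla v\cdot\nabla\phi_2$ works cleanly, whereas two Green steps (all the way to $\int u\Delta\phi_1+v\Delta\phi_2$) lead back to the nonlinearity.

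Second, the part you flag as ``the main obstacle'' --- tracking how the five types of boundary term and their coefficients $2m-2i-2+a_1$ and $2i+a_2$ arise --- is not bookkeeping in the pejorative sense; it \emph{is} the proof, and the statement cannot be verified without carrying it out (in particular the decomposition $\nabla f=\partial_n f\,n+\nabla_\tau f$ and the identity $\partial_n(x\cdot\nabla w)=\partial_n w+R\,\partial_{nn}w$ on $|x|=R$ are needed to separate the $\partial_n\Delta^iu\cdot\partial_n\Delta^{m-i-1}v$ terms from the $\nabla\Delta^iu\cdot\nabla\Delta^{m-i-1}v$ terms and from the $\Delta^{i+1}u\cdot\Delta^{m-i-1}v$ terms). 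An induction on $m$ is a reasonable organizing device, but the references perform the computation directly; either way, the claim that the inductive step ``introduces precisely one new term of each of the first three boundary types'' would need to be checked against the explicit formula, since the coefficient $2m-2i-2+a_1$ shifts with $m$ in \emph{every} summand, not only the new one. In short: the approach is the right one and matches the cited sources, but the proposal currently stops at the point where the actual work begins, and the ``peel all Laplacians'' phrasing, taken literally, would not reproduce the stated boundary terms.
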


\vskip 0.2cm

\section{Proof of Theorem~\ref{th1} }

{\it Proof of Theorem~\ref{th1}.} Since the theorem was proved in dimension $N\leq 2m+1$ (see \cite{Faz14}), we only prove the Theorem in dimension $N\geq 2m+2$. Without loss of generality, we assume $p\geq q$. Then we may assume in addition that 
\begin{align}\label{condp}
p\geq \frac{N+2m}{N-2m}.
\end{align}
In fact, if $q\le p< \frac{N+2m}{N-2m}$, then we may apply Theorem \ref{th2} (which will be proved independently of Theorem \ref{th1} in Section 5). The proof is based on the idea of Souplet, which consists of 5 steps. We repeat these steps in detail for completeness and because of the additional technicalities coming from the coefficient $|x|^a, |x|^b$ and from polyharmonic operator. Suppose that there exists a positive solution $(u, v)$ of (\ref{1}) in ${\mathbb R}^N$.

\noindent{\bf Step 1:} {\it Preparations. } Let us choose $a_1, a_2$ such that $a_1+a_2=N-2m$ and 
\begin{equation}\label{choicea1a2}
\frac{N+a}{p+1}>a_1, \; \frac{N+b}{q+1}>a_2
\end{equation}
and set $F(R)=\int_{B_R}(|x|^bu^{q+1}+|x|^av^{p+1})dx$.
By using the Rellich-Pohozaev identity (Lemma \ref{pohozaev}) and noting $|x|^av^{p+1}=(-\Delta)^mu.v,\; |x|^bu^{q+1}=(-\Delta)^mv.u$, we have
\begin{align*}
F(R)\leq C\big(G_1(R)+G_2(R)\big),
\end{align*}
where 
\begin{align}\label{G1}
 &G_1(R)=R^{N}\sum\limits_{i=0}^{m}\int\limits_{S^{N-1}}|u_i|. |v_{m-i}|d\theta,\\
&G_2(R)=R^N\sum\limits_{j=0}^{m-1}\int\limits_{S^{N-1}}\left(|D_xu_{j}|+R^{-1}|u_{j}|\right)\left(|D_xv_{m-j-1}|+R^{-1}|v_{m-j-1}| \right)d\theta.\label{G2}
\end{align}
We denote by 
\begin{align*}
k=\frac{p+1}{p}, \quad h=\frac{q+1}{q}.
\end{align*}
\noindent{\bf Step 2:} {\it Estimation of $G_1(R)$.}
Fix $i\in \{0,1,...,m\}$, we consider following two cases

{\it Case $\frac{1}{h}>\frac{2i}{N-1}$}: We set
\begin{align*}
&\frac{1}{\gamma_{1i}}=1-\frac{2m-2i}{N-1}, \quad \frac{1}{\delta_{1i}}=\frac{1}{k}-\frac{2m-2i}{N-1},\\
&\frac{1}{\lambda_{1i}}=1-\frac{2i}{N-1}, \quad \frac{1}{\rho_{1i}}=\frac{1}{h}-\frac{2i}{N-1}.
\end{align*}
Let $z_i\in (1,+\infty)$ such that
\begin{align*}
\frac{1}{z_i}=\min\Big\{1-\frac{2m-2i}{N-1}, \frac{1}{q+1}+\frac{2i}{N-1}\Big\},
\end{align*}
then we have 
\begin{align*}
&\frac{1}{\delta_{1i}}\leq \frac{1}{z_i}\leq \frac{1}{\gamma_{1i}},\\
&\frac{1}{\rho_{1i}}\leq 1-\frac{1}{z_i}=\frac{1}{z_i'}\leq \frac{1}{\lambda_{1i}}.
\end{align*}
It follows from H\"older inequality that
\begin{align*}
\int\limits_{S^{N-1}}|u_i|.|v_{m-i}|d\theta \leq C\|u_i\|_{z_i}\|v_{m-i}\|_{z_i'}
\leq C \|u_i\|_{\gamma_{1i}}^{\nu_{1i}}\|u_i\|_{\delta_{1i}}^{1-\nu_{1i}}\|v_{m-i}\|_{\lambda_{1i}}^{\tau_{1i}}\|v_{m-i}\|_{\rho_{1i}}^{1-\tau_{1i}},
\end{align*}
where 
\begin{align}\label{nu}
\nu_{1i}=1-\frac{\frac{1}{\gamma_{1i}}-\frac{1}{z_i}}{\frac{1}{\gamma_{1i}}-\frac{1}{\delta_{1i}}}, \qquad \tau_{1i}=1-\frac{\frac{1}{\lambda_{1i}}-\frac{1}{z'_i}}{\frac{1}{\lambda_{1i}}-\frac{1}{\rho_{1i}}}.
\end{align}
Applying Lemma~\ref{1b}, we have 
\begin{align*}
\|u_i\|_{\gamma_{1i}}&\leq C (\|D^{2m-2i}_\theta u_i\|_{1+\varepsilon}+\|u_i\|_1)\leq C (R^{2m-2i}\|D^{2m-2i}_x u_i\|_{1+\varepsilon}+\|u_i\|_1)\\
&\leq CR^{2m-2i}(\|D^{2m}_x u\|_{1+\varepsilon}+R^{-2m+2i}\|u_i\|_1),\\
\|u_i\|_{\delta_{1i}}&\leq C (\|D^{2m-2i}_\theta u_i\|_{k}+\|u_i\|_1)\leq C (R^{2m-2i}\|D^{2m-2i}_x u_i\|_{k}+\|u_i\|_1)\\
&\leq CR^{2m-2i}(\|D^{2m}_x u\|_{k}+R^{-2m+2i}\|u_i\|_1),\\
\|v_{m-i}\|_{\lambda_{1i}}&\leq C (\|D^{2i}_\theta v_{m-i}\|_{1+\varepsilon}+\|v_{m-i}\|_1)\leq C (R^{2i}\|D^{2i}_x v_{m-i}\|_{1+\varepsilon}+\|v_{m-i}\|_1)\\
&\leq CR^{2i}(\|D^{2m}_x v\|_{1+\varepsilon}+R^{-2i}\|v_{m-i}\|_1),\\
\|v_{m-i}\|_{\rho_{1i}}&\leq C (\|D^{2i}_\theta v_{m-i}\|_{h}+\|v_{m-i}\|_1)\leq C (R^{2i}\|D^{2i}_x v_{m-i}\|_{h}+\|v_{m-i}\|_1)\\
&\leq CR^{2i}(\|D^{2m}_x v\|_{h}+R^{-2i}\|v_{m-i}\|_1).
\end{align*}
Consequently, 
\begin{align}
\int\limits_{S^{N-1}}|u_i|.|v_{m-i}|d\theta \leq C R^{2m}&(\|D^{2m}_x u\|_{1+\varepsilon}+R^{-2m+2i}\|u_i\|_1)^{\nu_{1i}}\notag\\
&.(\|D^{2m}_x u\|_{k}+R^{-2m+2i}\|u_i\|_1)^{1-\nu_{1i}}\notag\\
&.(\|D^{2m}_x v\|_{1+\varepsilon}+R^{-2i}\|v_{m-i}\|_1)^{\tau_{1i}}\notag\\
&.(\|D^{2m}_x v\|_{h}+R^{-2i}\|v_{m-i}\|_1)^{1-\tau_{1i}}.\label{hol}
\end{align}

{\it Case $\frac{1}{h}\leq\frac{2i}{N-1}$}: We take $\tau_{1i}=1$ then inequality (\ref{hol}) still holds.

In two cases, we always have 
\begin{align*}
G_1(R) \leq C \sum\limits_{i=0}^{m} R^{N+2m}&(\|D^{2m}_x u\|_{1+\varepsilon}+R^{-2m+2i}\|u_i\|_1)^{\nu_{1i}}\\
&.(\|D^{2m}_x u\|_{k}+R^{-2m+2i}\|u_i\|_1)^{1-\nu_{1i}}\notag\\
&. (\|D^{2m}_x v\|_{1+\varepsilon}+R^{-2i}\|v_{m-i}\|_1)^{\tau_{1i}}\\
&.(\|D^{2m}_x v\|_{h}+R^{-2i}\|v_{m-i}\|_1)^{1-\tau_{1i}}
\end{align*}
where $\nu_{1i}, \tau_{1i}$ are defined in (\ref{nu}), in which $\tau_{1i}=1$  if $\frac{1}{h}\leq \frac{2i}{N-1}$.

\noindent{\bf Step 3:} {\it Estimation of $G_2(R)$.}
Fix $j\in \{0,1,...,m-1\}$, we consider following two cases

{\it Case $\frac{1}{h}>\frac{2j+1}{N-1}$}: We set
\begin{align*}
&\frac{1}{\gamma_{2j}}=1-\frac{2m-2j-1}{N-1}, \quad \frac{1}{\delta_{2j}}=\frac{1}{k}-\frac{2m-2j-1}{N-1},\\
&\frac{1}{\lambda_{2j}}=1-\frac{2j+1}{N-1}, \quad \frac{1}{\delta_{2j}}=\frac{1}{h}-\frac{2j+1}{N-1}.
\end{align*}
Let $t_j\in (1,+\infty)$ such that
\begin{align*}
\frac{1}{t_j}=\min\Big\{1-\frac{2m-2j-1}{N-1}, \frac{1}{q+1}+\frac{2j+1}{N-1}\Big\},
\end{align*}
then we have 
\begin{align*}
&\frac{1}{\delta_{2j}}\leq \frac{1}{t_j}\leq \frac{1}{\gamma_{2j}},\\
&\frac{1}{\rho_{2j}}\leq 1-\frac{1}{t_j}=\frac{1}{t_j'}\leq \frac{1}{\lambda_{2j}}.
\end{align*}
It follows from Lemma~\ref{1b} that
\begin{align*}
&R^{-1}\|u_{j}\|_{t_j}\leq CR^{-1}(\|D_\theta u_{j}\|_{t_j}+\|u_{j}\|_{1})\leq C(\|D_x u_{j}\|_{t_j}+R^{-1}\|u_{j}\|_{1}),\\
&R^{-1}\|v_{m-j-1}\|_{t_j'}\leq CR^{-1}(\|D_\theta v_{m-j-1}\|_{t_j'}+\|v_{m-j-1}\|_1)\\
&\qquad\qquad\leq C(\|D_x v_{m-j-1}\|_{t_j'}+R^{-1}\|v_{m-j-1}\|_1).
\end{align*}
This combined with H\"older inequality yields
\begin{align*}
\int\limits_{S^{N-1}}&\left(|D_xu_{j}|+R^{-1}|u_{j}|\right)\left(|D_xv_{m-j-1}|+R^{-1}|v_{m-j-1}| \right)d\theta\\
&\leq (\|D_xu_{j}\|_{t_j}+R^{-1}\|u_{j}\|_{t_j}).(\|D_xv_{m-j-1}\|_{t_j'}+R^{-1}\|v_{m-j-1}\|_{t_j'})\\
&\leq (\|D_xu_{j}\|_{t_j}+R^{-1}\|u_{j}\|_{1}).(\|D_xv_{m-j-1}\|_{t_j'}+R^{-1}\|v_{m-j-1}\|_{1})\\
&\leq (\|D_xu_{j}\|_{\gamma_{2j}}+R^{-1}\|u_{j}\|_{1})^{\nu_{2j}}.(\|D_xu_{j}\|_{\delta_{2j}}+R^{-1}\|u_{j}\|_{1})^{1-\nu_{2j}}\\
&\;\quad.(\|D_xv_{m-j-1}\|_{\lambda_{2j}}+R^{-1}\|v_{m-j-1}\|_{1})^{\tau_{2j}}\\
&\;\quad.(\|D_xv_{m-j-1}\|_{\rho_{2j}}+R^{-1}\|v_{m-j-1}\|_{1})^{1-\tau_{2j}}
\end{align*}
where
\begin{align*}
\nu_{2j}=1-\frac{\frac{1}{\gamma_{2j}}-\frac{1}{t_j}}{\frac{1}{\gamma_{2j}}-\frac{1}{\delta_{2j}}}, \qquad \tau_{2j}=1-\frac{\frac{1}{\lambda_{2j}}-\frac{1}{t'_j}}{\frac{1}{\lambda_{2j}}-\frac{1}{\rho_{2j}}}.
\end{align*}
On the other hand, applying Lemma~\ref{1b}, we have 
\begin{align*}
\|D_xu_{j}\|_{\gamma_{2j}}&\leq C (\|D^{2m-2j-1}_\theta D_xu_{j}\|_{1+\varepsilon}+\|D_xu_{j}\|_1)\\
&\leq C (R^{2m-2j-1}\|D^{2m-2j-1}_x u_j\|_{1+\varepsilon}+\|D_xu_j\|_1)\\
&\leq CR^{2m-2j-1}(\|D^{2m}_x u\|_{1+\varepsilon}+R^{-2m+2j+1}\|D_xu_j\|_1),\\
\|D_xu_j\|_{\delta_{2j}}&\leq C (\|D^{2m-2j-1}_\theta D_xu_j\|_{k}+\|D_xu_j\|_1)\\
&\leq C (R^{2m-2j-1}\|D^{2m-2j-1}_x D_xu_j\|_{k}+\|D_xu_j\|_1)\\
&\leq CR^{2m-2j-1}(\|D^{2m}_x u\|_{k}+R^{-2m+2j+1}\|D_xu_j\|_1),\\
\|D_xv_{m-j-1}\|_{\lambda_{2j}}&\leq C (\|D^{2j+1}_\theta D_xv_{m-j-1}\|_{1+\varepsilon}+\|D_xv_{m-j-1}\|_1)\\
&\leq C (R^{2j+1}\|D^{2j+1}_x D_xv_{m-j-1}\|_{1+\varepsilon}+\|D_xv_{m-j-1}\|_1)\\
&\leq CR^{2j+1}(\|D^{2m}_x v\|_{1+\varepsilon}+R^{-2j-1}\|D_xv_{m-j-1}\|_1),\\
\|D_xv_{m-j-1}\|_{\rho_{2j}}&\leq C (\|D^{2j+1}_\theta D_xv_{m-j-1}\|_{h}+\|D_xv_{m-j-1}\|_1)\\
&\leq C (R^{2j+1}\|D^{2j+1}_x D_xv_{m-j}\|_{h}+\|D_xv_{m-j-1}\|_1)\\
&\leq CR^{2j+1}(\|D^{2m}_x v\|_{h}+R^{-2j-1}\|D_xv_{m-j-1}\|_1).
\end{align*}
Hence,
\begin{align*}
R^N&\int\limits_{S^{N-1}}\left(|D_xu_{j}|+R^{-1}|u_{j}|\right)\left(|D_xv_{m-j-1}|+R^{-1}|v_{m-j-1}| \right)d\theta\notag\\
&\leq CR^{N+2m}(\|D^{2m-2j-1}_x u_j\|_{1+\varepsilon}+R^{-2m+2j+1}\|D_xu_j\|_1+R^{-2m+2j}\|u_{j}\|_{1})^{\nu_{2j}}\notag\\
&\qquad \quad .(\|D^{2m}_x u\|_{k}+R^{-2m+2j+1}\|D_xu_j\|_1+R^{-2m+2j}\|u_{j}\|_{1})^{1-\nu_{2j}}\notag\\
&\qquad\quad.(\|D^{2m}_x v\|_{1+\varepsilon}+R^{-2j-1}\|D_xv_{m-j-1}\|_1+R^{-2j-2}\|v_{m-j-1}\|_{1})^{\tau_{2j}}\notag\\
&\qquad\quad .(\|D^{2m}_x v\|_{h}+R^{-2j-1}\|D_xv_{m-j-1}\|_1+R^{-2j-2}\|v_{m-j-1}\|_{1})^{1-\tau_{2j}}.
\end{align*}
{\it Case $\frac{1}{h}\leq \frac{2j+1}{N-1}$}: We take $\tau_{2j}=1$ then inequality (\ref{hol}) still holds.
Therefore,
\begin{align*}
G_2(R)\!\leq\! C&R^{N+2m}{\textstyle\sum\limits_{j=0}^{m\!-\!1}}(\|D^{2m-2j-1}_x u_j\|_{1+\varepsilon}\!+\!R^{-2m+2j+1}\|D_xu_j\|_1\!+\!R^{-2m+2j}\|u_{j}\|_{1})^{\nu_{2j}}\notag\\
&\qquad \quad .(\|D^{2m}_x u\|_{k}+R^{-2m+2j+1}\|D_xu_j\|_1+R^{-2m+2j}\|u_{j}\|_{1})^{1-\nu_{2j}}\notag\\
&\qquad\quad.(\|D^{2m}_x v\|_{1+\varepsilon}+R^{-2j-1}\|D_xv_{m-j-1}\|_1+R^{-2j-2}\|v_{m-j-1}\|_{1})^{\tau_{2j}}\notag\\
&\qquad\quad .(\|D^{2m}_x v\|_{h}+R^{-2j-1}\|D_xv_{m-j-1}\|_1+R^{-2j-2}\|v_{m-j-1}\|_{1})^{1-\tau_{2j}}.
\end{align*}
\noindent{\bf Step 4:} {\it measure and feedback argument. } For a given $K>0$, let us define the sets
\begin{align*}
&\Gamma_1(R)=\{r\in (R,2R); \|D^{2m}_xu(r)\|_k^k>KR^{-N+\frac{a}{p}}F(4R)\},\\
&\Gamma_2(R)=\{r\in (R,2R); \|D^{2m}_xv(r)\|_h^h>KR^{-N+\frac{b}{q}}F(4R)\},\\
&\Gamma_3(R)=\{r\in (R,2R); \|D^{2m}_xu(r)\|_{1+\varepsilon}^{1+\varepsilon}>KR^{-2m-\alpha-\frac{a+bp}{pq-1}+a\varepsilon}\},\\
&\Gamma_4(R)=\{r\in (R,2R); \|D^{2m}_xv(r)\|_{1+\varepsilon}^{1+\varepsilon}>KR^{-2m-\beta-\frac{b+aq}{pq-1}+b\varepsilon}\},\\
&\Gamma_{5i}(R)=\{r\in (R,2R); \|u_i(r)\|_1>KR^{-2i-\alpha-\frac{a+bp}{pq-1}}\},\quad i=0,1,...,m,\\
&\Gamma_{6i}(R)=\{r\in (R,2R); \|v_i(r)\|_1>KR^{-2i-\beta-\frac{b+aq}{pq-1}}\},\quad i=0,1,...,m,\\
&\Gamma_{7j}(R)=\{r\in (R,2R); \|D_xu_j(r)\|_1>KR^{-2j-1-\alpha-\frac{a+bp}{pq-1}}\},\quad j=0,1,...,m-1,\\
&\Gamma_{8j}(R)=\{r\in (R,2R); \|D_xv_j(r)\|_1>KR^{-2j-1-\beta-\frac{b+aq}{pq-1}}\}, \quad j=0,1,...,m-1.
\end{align*}
It follows from Lemma~\ref{lem} that
\begin{align*}
CR^{a/p}F(4R)\geq \int\limits_{R}^{2R}\|D_x^{2m}u(r)\|_k^kr^{N-1}dr\geq |\Gamma_1(R)|R^{N-1}KR^{-N+\frac{a}{p}}F(4R).
\end{align*}
Consequently,
\begin{align*}
|\Gamma_1(R)|\leq \frac{C}{K}R.
\end{align*}
Similarly, we have
\begin{align*}
&|\Gamma_l(R)|\leq \frac{C}{K}R, \quad l=2,3,4,\\
&|\Gamma_{li}(R)|\leq \frac{C}{K}R \quad l=5,6; \; i=0,1,...,m,\\
&|\Gamma_{lj}(R)|\leq \frac{C}{K}R \quad l=7,8; \; j=0,1,...,m-1.
\end{align*}
By choosing $K$ large enough, we can find  
\begin{align*}
\tilde R\in(R,2R)\subset \left\{\bigcup_{l=1}^4\Gamma_l(R)\bigcup_{l=5,6; i=0,...,m} \Gamma_{li}(R)\bigcup_{l=7,8; j=0,...,m-1} \Gamma_{lj}(R)\right\}\ne \emptyset.
\end{align*}
Let us check that  
\begin{align}
 2m+\alpha+\frac{a+bp}{pq-1}> \frac{N}{k}-\frac{a}{pk}\label{10},\\
 2m+\beta+\frac{b+aq}{pq-1}> \frac{N}{h}-\frac{b}{qh}.\label{11}
\end{align}
Indeed, by computation
\begin{align*}
M:&= (2m+\alpha)k+\frac{a+bp}{pq-1}k-N+\frac{a}{p}\\
&=p\beta k+\frac{(a+bp)(p+1)}{p(pq-1)}-N+\frac{a}{p}\\
&=\beta(p+1)+ \frac{(a+bp)}{2mp}\alpha -N+\frac{a}{p}\\
&=p\beta+\beta+\frac{a}{2mp}\alpha+ \frac{b}{2m}\alpha-N+\frac{a}{p}\\
&=\alpha+\beta+2m-N+\frac{b}{2m}\alpha+\frac{a}{2mp}(\alpha+2)\\
&=\alpha+\beta+2m-N+\frac{b}{2m}\alpha+\frac{a}{2m}\beta>0.
\end{align*}
Thus, (\ref{10}) holds. Similarly for (\ref{11}).
Hence, for $\varepsilon >0$ small enough, we have
\begin{align}
 \frac{1}{1+\varepsilon}(2m+\alpha+\frac{a+bp}{pq-1}-a\varepsilon)> \frac{N}{k}-\frac{a}{pk}\label{10a},\\
 \frac{1}{1+\varepsilon}(2m+\beta+\frac{b+aq}{pq-1}-b\varepsilon)> \frac{N}{h}-\frac{b}{qh}.\label{11a}
\end{align}
We may now control $G_1(R)$ and $G_2(R)$ as follows.
\begin{align}
 G_1(\tilde{R})\leq C R^{N+2m}\sum\limits_{i=0}^{m}&\left(R^{-\alpha-2m -\frac{a+bp}{pq-1}}+R^{(-2m-\alpha-\frac{a+bp}{pq-1}+a\varepsilon)/(1+\varepsilon)}\right)^{\nu_{1i}}\notag\\
&.\left(R^{-\beta-2m -\frac{b+aq}{pq-1}}+R^{(-2m-\beta-\frac{b+aq}{pq-1}+b\varepsilon)/(1+\varepsilon)}\right)^{\tau_{1i}}\notag\\
&.\left(R^{-\frac{N}{k}+\frac{a}{pk}}F^{1/k}(4R)+R^{-\alpha-2m -\frac{a+bq}{pq-1}}\right)^{1-\nu_{1i}}\notag\\
&.\left(R^{-\frac{N}{h}+\frac{b}{qh}}F^{1/h}(4R)+R^{-\beta-2m -\frac{b+aq}{pq-1}}\right)^{1-\tau_{1i}}.\label{31}
\end{align}
Using (\ref{10a}) - (\ref{11a}), we obtain
\begin{align}\label{30}
G_1(\tilde{R})\leq C \sum\limits_{i=0}^{m}R^{-a_{1i}(\varepsilon)}F^{b_{1i}}(4R)
\end{align}
where
\begin{align*}
a_{1i}(\varepsilon)=&-N-2m+\frac{\nu_{1i}}{1+\varepsilon}\left(\alpha+2m +\frac{a+bp}{pq-1}-a\varepsilon\right)\notag\\
&+ \frac{\tau_{1i}}{1+\varepsilon}\left(\beta+2m +\frac{b+aq}{pq-1}-b\varepsilon\right)\notag\\
&+\left(N-\frac{a}{p}\right)\frac{(1-\nu_{1i})}{k} +
\left(N-\frac{b}{q}\right)\frac{(1-\tau_{1i})}{h},
\end{align*}
\begin{align*}
 b_{1i}=&\frac{1-\nu_{1i}}{k}+\frac{1-\tau_{1i}}{h}.
\end{align*}
Similarly,
\begin{align}
 G_2(\tilde{R})\leq C R^{N+2m}\sum\limits_{j=0}^{m-1}&\left(R^{-\alpha-2m -\frac{a+bp}{pq-1}}+R^{(-2m-\alpha-\frac{a+bp}{pq-1}+a\varepsilon)/(1+\varepsilon)}\right)^{\nu_{2j}}\notag\\
&.\left(R^{-\beta-2m -\frac{b+aq}{pq-1}}+R^{(-2m-\beta-\frac{b+aq}{pq-1}+b\varepsilon)/(1+\varepsilon)}\right)^{\tau_{2j}}\notag\\
&.\left(R^{-\frac{N}{k}+\frac{a}{pk}}F^{1/k}(4R)+R^{-\alpha-2m -\frac{a+bq}{pq-1}}\right)^{1-\nu_{2j}}\notag\\
&.\left(R^{-\frac{N}{h}+\frac{b}{qh}}F^{1/h}(4R)+R^{-\beta-2m -\frac{b+aq}{pq-1}}\right)^{1-\tau_{2j}}\notag\\
&\leq C \sum\limits_{j=0}^{m-1} R^{-a_{2j}(\varepsilon)}F^{b_{2j}}(4R).\label{31aa}
\end{align}
where
\begin{align*}
a_{2j}(\varepsilon)=&-N-2m+\frac{\nu_{2j}}{1+\varepsilon}\left(\alpha+2m +\frac{a+bp}{pq-1}-a\varepsilon\right)\notag\\
&+ \frac{\tau_{2j}}{1+\varepsilon}\left(\beta+2m +\frac{b+aq}{pq-1}-b\varepsilon\right)\notag\\
&+\left(N-\frac{a}{p}\right)\frac{(1-\nu_{2j})}{k} +
\left(N-\frac{b}{q}\right)\frac{(1-\tau_{2j})}{h},
\end{align*}
\begin{align*}
b_{2j}=&\frac{1-\nu_{2j}}{k}+\frac{1-\tau_{2j}}{h}.
\end{align*}

We set
\begin{align*}\tilde{a}=\min\left\{a_{1i}(\varepsilon), a_{2j}(\varepsilon); \quad i=0,...,m;\; j=0,...,m-1\right\},\\
\tilde{b}=\max\{b_{1i}, b_{2j}, \quad i=0,...,m;\; j=0,...,m-1\}.
\end{align*}
 Then from (\ref{30})  and (\ref{31aa}), we obtain
\begin{align}\label{32}
 F(R)\leq C R^{-\tilde{a}}F^{\tilde{b}}(4R), R\geq 1.
\end{align}
We claim that there exist a constant $M_0>0$ and a sequence $R_l\to \infty$ such that 
\begin{align*}
 F(4R_l)\leq M_0F(R_l).
\end{align*}
Assume that the claim is false. Then, for any $M_0>0$, there exists $R_0>0$ such that $F(4R)\geq M_0F(R)$ for all $R\geq R_0$. But since $u,v$ are bounded, we have $F(R)\leq CR^{N+a+b}$. Thus 
$$M_0^lF(R_0)\leq F(4^lR_0)\leq C(4^lR_0)^{N+a+b}= CR_0^{N+a+b}4^{l(N+a+b)}, \forall l\geq 0.$$
This is a contradiction for $l$ large if we choose $M_0>4^{N+a+b}$.

Now we assume we have proved that $\tilde{a}>0$ and $\tilde{b}<1$, then from (\ref{32}) we have
$$F(R_l)\leq C R_l^{-\tilde{a}/(1-\tilde{b})}. $$
Letting $l\to \infty$, we obtain $\int_{{\mathbb R}^N}(|x|^bu^{q+1}+|x|^av^{p+1})dx=0$, hence $u\equiv v\equiv 0$: contradiction.

\medskip
\noindent{\bf Step 5:} {\it Fulfillment of the conditions $\tilde{a}>0$ and $\tilde{b}<1$}

{\it a) Verification of conditions $a_{1i}(0)>0$ and $b_{1i}<1$.} We first verify $b_{1i}<1$. 

{\it Case 1: $\frac{1}{h}\geq\frac{2m}{N-1}$.} From the definition of $z_i$, we have $\frac{1}{z_i}=\frac{1}{q+1}+\frac{2i}{N-1}$. Hence,
\begin{align*}
1-b_{1i}&=1-\frac{1}{k}.\frac{\frac{1}{\gamma_{1i}}-\frac{1}{z_i}}{\frac{1}{\gamma_{1i}}-\frac{1}{\delta_{1i}}}-\frac{1}{h}.\frac{\frac{1}{\lambda_{1i}}-\frac{1}{z'_i}}{\frac{1}{\lambda_{1i}}-\frac{1}{\rho_{1i}}}=1-p\Big(1-\frac{1}{q+1}-\frac{2m}{N-1}\Big)-\frac{q}{q+1}\\
&=\frac{pq-1}{(q+1)(N-1)}\big(\alpha-N+2m+1\big)>0.
\end{align*}

{\it Case 2: $\frac{2i}{N-1}<\frac{1}{h}<\frac{2m}{N-1}$.} From the definition of $z_i$, we have $\frac{1}{z_i}=1-\frac{2m-2i}{N-1}$. Hence,
\begin{align*}
1-b_{1i}&=1-\frac{1}{k}.\frac{\frac{1}{\gamma_{1i}}-\frac{1}{z_i}}{\frac{1}{\gamma_{1i}}-\frac{1}{\delta_{1i}}}-\frac{1}{h}.\frac{\frac{1}{\lambda_{1i}}-\frac{1}{z'_i}}{\frac{1}{\lambda_{1i}}-\frac{1}{\rho_{1i}}}\\
&=1-\frac{1}{h}.\frac{1-\frac{2m}{N-1}}{1-\frac{1}{h}}>1-\frac{1}{h}>0.
\end{align*}

{\it Case 3: $\frac{2i}{N-1}\geq\frac{1}{h}$.} Then we have $\tau_{1i}=1$. Hence,
\begin{align*}
1-b_{1i}=1-\frac{1}{k}(1-\nu_{1i})>0.
\end{align*}
Therefore $b_{1i}<1$, for any $i\in \{0,1,..., m\}$.
We next verify $a_{1i}(0)>0$. Indeed, 
\begin{align*}
a_{1i}(0)=&-N-2m+\nu_{1i}\left(\alpha+2m +\frac{a+bp}{pq-1}\right)+ \tau_{1i}\left(\beta+2m +\frac{b+aq}{pq-1}\right)\notag\\
&+\left((2m+\alpha)k+\frac{a+bp}{pq-1}k-M \right)\frac{(1-\nu_{1i})}{k} \\
&+\left((2m+\beta)h+\frac{b+aq}{pq-1}h-M\right)\frac{(1-\tau_{1i})}{h}\\
=&-N-2m+2m+\alpha+\frac{a+bp}{pq-1}+2m+\beta+\frac{b+aq}{pq-1}\\
&-M\left(\frac{1-\nu_{1i}}{k}+\frac{1-\tau_{1i}}{h}\right)\\
=& M- Mb_{1i} = M(1-b_{1i})>0.
\end{align*}

{\it b) Verification of conditions $a_{2j}(0)>0$ and $b_{2j}<1$.} We first verify $b_{2j}<1$. 

{\it Case 1: $\frac{1}{h}\geq\frac{2m}{N-1}$.} We have $\frac{1}{t_j}=\frac{1}{q+1}+\frac{2j+1}{N-1}$.
\begin{align*}
1-b_{2j}&=1-\frac{1}{k}.\frac{\frac{1}{\gamma_{2j}}-\frac{1}{t_j}}{\frac{1}{\gamma_{2j}}-\frac{1}{\delta_{2j}}}-\frac{1}{h}.\frac{\frac{1}{\lambda_{2j}}-\frac{1}{t'_j}}{\frac{1}{\lambda_{2j}}-\frac{1}{\rho_{2j}}}=1-p\Big(1-\frac{1}{q+1}-\frac{2m}{N-1}\Big)-\frac{q}{q+1}\\
&=\frac{pq-1}{(q+1)(N-1)}\big(\alpha-N+2m+1\big)>0.
\end{align*}

{\it Case 2: $\frac{2j+1}{N-1}<\frac{1}{h}<\frac{2m}{N-1}$.} We have $\frac{1}{t_j}=1-\frac{2m-2j-1}{N-1}$.
\begin{align*}
1-b_{2j}&=1-\frac{1}{k}.\frac{\frac{1}{\gamma_{2j}}-\frac{1}{t_j}}{\frac{1}{\gamma_{2j}}-\frac{1}{\delta_{2j}}}-\frac{1}{h}.\frac{\frac{1}{\lambda_{2j}}-\frac{1}{t'_j}}{\frac{1}{\lambda_{2j}}-\frac{1}{\rho_{2j}}}\\
&=1-\frac{1}{h}.\frac{1-\frac{2m}{N-1}}{1-\frac{1}{h}}>1-\frac{1}{h}>0.
\end{align*}

{\it Case 3:  $\frac{2j+1}{N-1}\geq\frac{1}{h}$.} Then we have $\tau_{2j}=1$. Hence
\begin{align*}
1-b_{2j}=1-\frac{1}{k}(1-\nu_{2j})>0.
\end{align*}
Therefore $b_{2j}<1$, for any $j\in \{0,1,..., m-1\}$.
We next verify $a_{2j}(0)>0$. Indeed, 
\begin{align*}
a_{2j}(0)=&-N-2m+\nu_{2j}\left(\alpha+2m +\frac{a+bp}{pq-1}\right)+ \tau_{2j}\left(\beta+2m +\frac{b+aq}{pq-1}\right)\notag\\
&+\left((2m+\alpha)k+\frac{a+bp}{pq-1}k-M \right)\frac{(1-\nu_{2j})}{k} \\
&+\left((2m+\beta)h+\frac{b+aq}{pq-1}h-M\right)\frac{(1-\tau_{2j})}{h}\\
=&-N-2m+2m+\alpha+\frac{a+bp}{pq-1}+2m+\beta+\frac{b+aq}{pq-1}\\
&-M\left(\frac{1-\nu_{2j}}{k}+\frac{1-\tau_{2j}}{h}\right)\\
=& M- Mb_{2j} = M(1-b_{2j})>0.
\end{align*}
From the computations above, we can choose $\varepsilon>0$ small enough such that $\tilde a>0$ and $\tilde b<1$. Theorem is proved.
\smallskip

\section{Proof of Theorem \ref{th2}.}

We first need the following decay estimates for solutions of system (\ref{1}). The proof of Proposition~\ref{th3} is totally similar to that in \cite[Theorem 1.3]{Pha12}.  
\begin{proposition}\label{th3}  Let $p,q\geq 1$, $pq\ne 1$, $a,b \geq 0$.  Assume (\ref{condi1}) and (\ref{condi2}). Then there exists a constant $C=C(N,m,p,q,a,b)>0$ such that any nonnegative solution of system~(\ref{1}) in $\Omega=\{x\in{\mathbb R}^N:\, |x|>\rho\}$ ($\rho\ge 0$) satisfies
\begin{align*}
&u(x)\leq C|x|^{-\alpha -\frac{a+bp}{pq-1}},
\quad v(x)\leq C|x|^{-\beta -\frac{b+aq}{pq-1}}
,
\quad  |x|>2\rho.\\
&|\nabla^i u(x)|\leq C|x|^{-i-\alpha -\frac{a+bp}{pq-1}},
|\nabla^i u(x)|\leq C|x|^{-i-\beta -\frac{b+aq}{pq-1}},
\; |x|>2\rho, \; i=0,...,2m-1.
\end{align*}
\end{proposition}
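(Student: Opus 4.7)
The approach is a doubling--rescaling argument of Pol\'a\v cik--Quittner--Souplet type, reducing the desired pointwise decay to the Liouville theorem for the unweighted polyharmonic Lane--Emden system, i.e., Theorem~\ref{th1} applied with $a=b=0$ (whose hypotheses are precisely (\ref{condi1}) and (\ref{condi2})). This follows the scheme of \cite{Pha12}, adapted to the polyharmonic setting via interior $L^p$-regularity for $(-\Delta)^m$. Set $\alpha_u := \alpha+(a+bp)/(pq-1)$ and $\alpha_v := \beta+(b+aq)/(pq-1)$; a direct computation yields the key identities $\alpha_u+2m-p\alpha_v=-a$ and $\alpha_v+2m-q\alpha_u=-b$, which express the invariance of (\ref{1}) under $(u,v)(x)\mapsto(R^{\alpha_u}u(Rx),R^{\alpha_v}v(Rx))$.

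Argue by contradiction: assume the pointwise bound fails, so there is $x_k\in\Omega$ with $|x_k|\to\infty$ and $|x_k|^{\alpha_u}u(x_k)+|x_k|^{\alpha_v}v(x_k)\to\infty$. Using the scale invariance, the rescaled pair $(u_k,v_k)(y):=(|x_k|^{\alpha_u}u(|x_k|y),\,|x_k|^{\alpha_v}v(|x_k|y))$ again solves (\ref{1}), on a domain containing the fixed annulus $A:=\{1/2<|y|<2\}$ for $k$ large, and blows up at $\xi_k:=x_k/|x_k|\in S^{N-1}\subset A$. Apply the doubling lemma of \cite{PQS07} to $M_k(y):=u_k(y)^{1/\alpha_u}+v_k(y)^{1/\alpha_v}$ on $A$ to obtain $\eta_k\in A$ with $M_k(\eta_k)\to\infty$, $M_k\le 2M_k(\eta_k)$ on $B(\eta_k,k/M_k(\eta_k))$, and $\eta_k\to\eta_\infty\in\overline A$ along a subsequence.

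With $\mu_k:=1/M_k(\eta_k)\to 0$, rescale $\bar u_k(z):=\mu_k^{\alpha_u}u_k(\eta_k+\mu_k z)$, $\bar v_k(z):=\mu_k^{\alpha_v}v_k(\eta_k+\mu_k z)$. The scaling identities convert the system into $(-\Delta)^m\bar u_k=|\eta_k/\mu_k+z|^a\bar v_k^p$ and $(-\Delta)^m\bar v_k=|\eta_k/\mu_k+z|^b\bar u_k^q$ on $B(0,k)$, together with the uniform bounds $\bar u_k,\bar v_k\le 2^{\max(\alpha_u,\alpha_v)}$ from doubling and the normalization $\bar u_k(0)^{1/\alpha_u}+\bar v_k(0)^{1/\alpha_v}=1$. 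Since $\eta_k$ stays in the bounded annulus $\overline A$, the local weight factors are comparable to positive constants depending only on $|\eta_\infty|$ and $\mu_k$; a further compatible multiplicative renormalization $(U_k,V_k):=(c_{u,k}\bar u_k,c_{v,k}\bar v_k)$, with $c_{u,k},c_{v,k}$ chosen by the two-equation balance dictated by the H\'enon structure, removes the weight factors in the limit while preserving nontriviality. Passing to the limit via interior $L^p$-regularity for $(-\Delta)^m$ and Arzel\`a--Ascoli, along a subsequence we obtain a bounded nonnegative nontrivial classical solution $(U,V)$ of the unweighted polyharmonic Lane--Emden system $(-\Delta)^m U=V^p$, $(-\Delta)^m V=U^q$ on $\mathbb R^N$.

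Theorem~\ref{th1} with $a=b=0$ then forces $U\equiv V\equiv 0$, contradicting nontriviality. This proves the $L^\infty$-bounds on $u,v$; the derivative estimates follow from interior $L^p$-regularity for $(-\Delta)^m$ applied on balls of radius $|x|/2$. The main technical obstacle is tracking the H\'enon weight through the double rescaling: the preliminary reduction to the unit sphere is crucial, since it confines the doubling center $\eta_k$ to a fixed annulus so that the local weight stays comparable to a positive constant, allowing the divergent factors from the second rescaling to be absorbed in a consistent renormalization. The H\'enon-corrected shifts $(a+bp)/(pq-1)$ and $(b+aq)/(pq-1)$ appearing in $\alpha_u,\alpha_v$ are precisely those forced by this balance.
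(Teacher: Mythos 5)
Your overall strategy---a doubling--rescaling argument in the spirit of Pol\'a\v cik--Quittner--Souplet, reducing the decay estimate to a Liouville theorem for the unweighted polyharmonic Lane--Emden system---is the right one and is what the paper has in mind when it refers to \cite[Theorem~1.3]{Pha12}. Two points, however, need correction, and the first is a genuine gap rather than a presentational choice.

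The scaling exponents in the doubling and second-rescaling step are wrong. The H\'enon exponents $\alpha_u,\alpha_v$ are the right ones for the first rescaling $(u_k,v_k)=(|x_k|^{\alpha_u}u(|x_k|\cdot),|x_k|^{\alpha_v}v(|x_k|\cdot))$, which preserves the weighted system. But the second rescaling is centered at $\eta_k$ lying in a fixed annulus $A$, away from the origin, where $|y|^a$ is comparable to a constant; the appropriate local scaling there is the \emph{unweighted} Lane--Emden one, with exponents $\alpha,\beta$ (which satisfy $\alpha+2m=p\beta$ and $\beta+2m=q\alpha$), not $\alpha_u,\alpha_v$. Concretely, with your $M_k=u_k^{1/\alpha_u}+v_k^{1/\alpha_v}$ and $\bar u_k(z)=\mu_k^{\alpha_u}u_k(\eta_k+\mu_k z)$, one gets $(-\Delta)^m\bar u_k=\mu_k^{-a}|\eta_k+\mu_kz|^a\bar v_k^p$, whose right-hand side diverges. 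Your renormalization $(U_k,V_k)=(c_{u,k}\bar u_k,c_{v,k}\bar v_k)$ is forced by the two-equation balance to be $c_{u,k}=\mu_k^{-(a+bp)/(pq-1)}$, $c_{v,k}=\mu_k^{-(b+aq)/(pq-1)}$, i.e.\ exactly $U_k(z)=\mu_k^{\alpha}u_k(\eta_k+\mu_k z)$, $V_k(z)=\mu_k^{\beta}v_k(\eta_k+\mu_k z)$. But the doubling inequality you proved only gives $u_k\le(2/\mu_k)^{\alpha_u}$, hence $U_k\le 2^{\alpha_u}\mu_k^{-(a+bp)/(pq-1)}$, which is unbounded as soon as $a>0$ or $b>0$, so you cannot invoke interior $L^p$-regularity and Arzel\`a--Ascoli to pass to a nontrivial limit. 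The fix is to run the doubling directly with $\tilde M_k=u_k^{1/\alpha}+v_k^{1/\beta}$ and rescale by $\mu_k^\alpha,\mu_k^\beta$ with $\mu_k=1/\tilde M_k(\eta_k)$: then $U_k\le 2^\alpha$ and $V_k\le 2^\beta$ uniformly, $U_k(0)^{1/\alpha}+V_k(0)^{1/\beta}=1$ gives nontriviality, the rescaled equations read $(-\Delta)^mU_k=|\eta_k+\mu_kz|^a V_k^p$ with $|\eta_k+\mu_kz|^a\to|\eta_\infty|^a\in(0,\infty)$ (a constant absorbable by a fixed multiplicative rescaling), and $\tilde M_k(\xi_k)\to\infty$ still follows from the contradiction hypothesis since $u_k(\xi_k)+v_k(\xi_k)=|x_k|^{\alpha_u}u(x_k)+|x_k|^{\alpha_v}v(x_k)\to\infty$.

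Separately, you cannot quote Theorem~\ref{th1} with $a=b=0$ as the terminal Liouville theorem. The paper's proof of Theorem~\ref{th1} falls back on Theorem~\ref{th2} in the range $q\le p<\frac{N+2m}{N-2m}$, Theorem~\ref{th2} rests on Proposition~\ref{th3}, and the paper explicitly says Theorem~\ref{th2} must be proved independently of Theorem~\ref{th1}. Feeding Theorem~\ref{th1} into the proof of Proposition~\ref{th3} thus creates a circular dependency in that parameter range; the Liouville input at the end of the blow-up must be drawn from prior results, not from Theorem~\ref{th1}.
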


\medskip
{\it Proof of Theorem \ref{th2}.}
Let $(u,v)$ be a positive solution of system (\ref{1}). 
By the Rellich-Pohozaev identity (Lemma~\ref{pohozaev})  with (\ref{choicea1a2}), we have
\begin{align*}
\int_{B_R}|x|^av^{p+1}\, dx
+\int_{B_R}|x|^bu^{q+1}\, dx  \leq G_1(R)+G_2(R)
\end{align*}
Where $G_1, G_2$ are defined in (\ref{G1}) and (\ref{G2}). Using the decay estimates in Proposition~\ref{th3}, we have, 
$$G_1(R)+G_2(R)\leq CR^{N-2m-(1+\frac{b}{2m})\alpha-(1+\frac{a}{2m})\beta} \to 0, \text{ as } R\to \infty, $$ 
due to (\ref{condi2}) (which implies (\ref{hyperbola})). Therefore, $u\equiv v\equiv 0$.
\qed

\medskip
{\bf Acknowledgement.} The author would like to thank Professor Philippe Souplet for valuable suggestions and comments. 

\end{document}